\newtheorem{theorem}[subsection]{Theorem}
\newtheorem{lemma}[subsection]{Lemma}
\newtheorem{corollary}[subsection]{Corollary}
\newtheorem{prop}[subsection]{Proposition}
\newtheorem{question}[subsection]{Question}
\theoremstyle{definition}
\newtheorem{definition}[subsubsection]{Definition}
\newtheorem{remark}[subsection]{Remark}
\newtheorem{example}[subsection]{Example}
\newcommand{\dist}{\mathrm{dist}}
\newcommand{\vecspan}{\mathrm{span}}
\newcommand{\Sect}{\mathrm{Sect}}
\newcommand{\haus}{\mathcal{H}}
\newcommand{\leb}{\mathcal{L}}
\newcommand{\spt}{\mathrm{spt}}
\newcommand{\diam}{\mathrm{diam}}
\newcommand{\Ric}{\mathrm{Ric}}
\newcommand{\reg}{\mathrm{reg}}
\newcommand{\sign}{\mathrm{sign}}
\newcommand{\spine}{\mathrm{spine}}
\newcommand{\lb}{\left}
\newcommand{\rb}{\right}
\newcommand{\graph}{\mathrm{graph}}
\newcommand{\tr}{\mathrm{tr}}
\newcommand{\Scal}{\mathrm{Scal}}
\newcommand{\eps}{\epsilon}
\newcommand{\proj}{\mathrm{proj}}
\newcommand{\sff}{\mathrm{II}}
\newcommand{\sn}{\mathrm{sn}}
\newcommand{\sing}{\mathrm{sing}}
\newcommand{\cF}{\mathcal{F}}
\newcommand{\cG}{\mathcal{G}}
\newcommand{\R}{\mathbb{R}}
\newcommand{\N}{\mathbb{N}}
\newcommand{\bC}{\mathbf{C}}
\newcommand{\C}{\mathbb{C}}
\newcommand{\bM}{\mathbf{M}}
\newcommand{\del}{\partial}
\newcommand{\delbar}{\bar\partial}
\newcommand{\cB}{\mathcal{B}}
\newcommand{\mc}{\mathbf{H}}
\newcommand{\Z}{\mathbb{Z}}
\newcommand{\cE}{\mathcal{E}}
\newcommand{\Ran}{\mathrm{Ran}}
\newcommand{\cK}{\mathcal{K}}
\newcommand{\cM}{\mathcal{M}}
\newcommand{\mcfK}{\mathcal{K}}
\newcommand{\mcfM}{\mathcal{M}}
\newcommand{\dilD}{\mathcal{D}}
\newcommand{\mcfC}{\mathcal{C}}
\newcommand{\vel}{\mathbf{vel}}
\newcommand{\bH}{\mathbf{H}}
\newcommand{\cU}{\mathcal{U}}
\newcommand{\cI}{\mathcal{I}}
\newcommand{\cL}{\mathcal{L}}
\title{Regularity of minimal surfaces near quadratic cones}
\author{Nick Edelen and Luca Spolaor}
\address{Department of Mathematics, University of Notre Dame, Notre Dame, IN, 46556}
\email{nedelen@nd.edu}
\address{Department of Mathematics, University of California, San Diego, La Jolla, CA, 92093}
\email{lspolaor@ucsd.edu}
\thanks{N.E. was supported in part by NSF grant DMS-1606492. L.S. was partially supported by NSF grant DMS-1810645.}
\begin{document}

\begin{abstract}
Hardt-Simon \cite{HaSi} proved that every area-minimizing hypercone $\bC$ having only an isolated singularity fits into a foliation of $\R^{n+1}$ by smooth, area-minimizing hypersurfaces asymptotic to $\bC$.  In this paper we prove that if a stationary varifold $M$ in the unit ball $B_1 \subset \R^{n+1}$ lies sufficiently close to a minimizing quadratic cone (for example, the Simons' cone $\bC^{3,3}$), then $\spt M \cap B_{1/2}$ is a $C^{1,\alpha}$ perturbation of either the cone itself, or some leaf of its associated foliation.  In particular, we show that singularities modeled on these cones determine the local structure not only of $M$, but of any nearby minimal surface.  Our result also implies the Bernstein-type result \cite{SiSo}, which characterizes area-minimizing hypersurfaces asymptotic to a quadratic cone as either the cone itself, or some leaf of the foliation.

\end{abstract}

\maketitle
\tableofcontents

\section{Introduction}

In this paper we are interested in the following question:
\begin{question}\label{q:main}
Suppose $M_i$ is a sequence of minimal $n$-dimensional surfaces, converging to some $\tilde M$ with multiplicity one in the unit ball in $\R^{n+1}$.  How does the singular structure of $\tilde M$ determine the singular or regular structure of the $M_i$ in $B_{1/2}$?
\end{question}

This question and its variants underly significant amount of research in the field of minimal surfaces, and other variational problems. Question \ref{q:main} arises when attempting to study the singularity structure or compactness properties of some class of surfaces.  For example, often the $M_i$ form some kind of ``blow-up'' sequence for a singularity, and the resulting $\tilde M$ is a singularity model.  In the very important special case when $M_i$ are dilations around a fixed point of a given minimal $M$, i.e. when
\[
M_i = \lambda_i (M - x), \quad \lambda_i \to \infty,
\]
then $\tilde M$ is dilation invariant, and any such $\tilde M$ arising in this fashion is called a tangent cone of $M$ at $x$.

Question \ref{q:main} is entirely answered when $\tilde M$ is smooth: in this case Allard's theorem \cite{All} implies that for $i$ large, the $M_i \cap B_{1/2}$ must be smooth also (in fact must be $C^{1,\alpha}$ perturbations of $\tilde M$).

For singular $\tilde M$, Question \ref{q:main} has been answered under some structral assumptions.  When $\tilde M$ has at most a ``strongly isolated'' singularity\footnote{Strongly isolated here means that some tangent cone of $M$ at the singularity is a multiplicity-one cone with an isolated singularity at $0$.}, \emph{and} each $M_i$ has at least one singularity of the same type, then profound work of \cite{Simon0} shows that the $M_i \cap B_{1/2}$ must be $C^1$ perturbations of $\tilde M$ for large $i$.  The conditions on $M_i$ are naturally satisfied when $\tilde M$ is the tangent cone of some minimal surface, and $M_i$ are the dilations around a fixed point.

In certain particular cases one can use the topology of $\tilde M$ to deduce singular structure on $M_i$.  For example, when $\tilde M$ is a union of half-planes, then \cite{Simon1} showed that $M_i \cap B_{1/2}$ are a $C^{1,\alpha}$ perturbation of the $\tilde M$.  Similar results hold if $\tilde M$ has tetrahedral singularities, and the $M_i$ have an associated ``orientation structure'' (\cite{CoEdSp}); or when $\tilde M$ is a union of two planes, and the $M_i$ are $2$-valued graphs (\cite{becker-kahn2017}).

Notice that in these theorems, either by assumption or by the nature of $\tilde M$, all the $M_i$ have the same regular or singular structure as $\tilde M$.

\vspace{3mm}

Question \ref{q:main} becomes more subtle when one does not assume anything about the singular nature of the $M_i$.  In this case it is possible for a sequence of smooth minimal (even area-minimizing!) surfaces to limit to a singular one.  For example, \cite{BDG} have constructed a foliation by smooth minimal surfaces of the complement of the Simons's cone
\[
\bC^{3,3} = \{ (x, y) \in \R^8 : |x| = |y| \} .
\]
More generally, \cite{HaSi} showed the same holds for any area-minimizing cone $\bC^n \subset \R^{n+1}$, having an isolated singularity.  In these circumstances the $M_i$ need not have the same singularity structure as $\tilde M$.

More generally, it is conceivable that singularities of one type can limit to a singularity of a different type.  Or, even worse, that multiple singularities of various types or dimensions could collapse into a singularity of some other type.  One can build toy examples of these behaviors using geodesic nets, but to our knowledge no examples exist for surfaces of dimension $> 1$.

\vspace{5mm}

In this paper we answer Question \ref{q:main} in the case when $\tilde M$ has singularities modeled on area-minimizing quadratic cones, i.e. so called minimizing $(p, q)$-singularities (see Definition \ref{def:pq}).  One of the main results of this paper is that any singularity in \emph{any minimal surface} that is sufficiently nearby a minimizing $(p, q)$-singularity, must be of the same type, and in particular none of the pathologies of the previous paragraph can occur for these singularities.

\begin{definition}\label{def:pq}
Following \cite{SiSo}, define the $\bC^{p,q}$ quadratic minimal cone as the hypersurface
\[
\bC^{p,q} = \{ (x, y) \in \R^{p+1} \times \R^{q+1} : q|x|^2 = p |y|^2 \} \subset \R^{n+1},
\]
where $p, q > 0$ are integers, and $p + q = n-1$.  Some people refer to the cones $\bC^{p,q}$ as generalized Simons' cones.

Given a surface or varifold $M$, we say that $x \in\spt M$ is a $(p,q)$-singularity if some tangent cone of $M$ at $x$ is (up to rotation) equal to $\bC^{p,q}$ with multiplicity one.  We say $x$ is a minimizing $(p,q)$-singularity if the associated cone $\bC^{p,q}$ is area-minimizing.
\end{definition}

Some remarks are in order.
\begin{remark}
An easy computation shows each $\bC^{p,q}$ is minimal.  However, there are no non-flat area-minimizing hypercones in $\R^{n+1}$, for $n < 7$, and of course by dimension-reducing there are no singular area-minimizing hypersurfaces in these dimensions either.  When $n = 7$, the cones $\bC^{3,3}$ and $\bC^{2,4}$ are area-minimizing, and in fact up to rigid motions these are the only known area-minimizing hypercones in $\R^8$.  When $n > 7$, then every $\bC^{p,q}$ cone is area-minimizing.  See, e.g. \cite{Simoes} and the references therein.  Thus our results are motivated by, and most relavant to, the regularity theory for area-minimizing hypersurfaces.
\end{remark}

\begin{remark}
By work of \cite{AllAlm}, if $x$ is a $(p,q)$-singularity of an $M$ which is stationary (or has $L^p$ mean curvature, for $p > n$), then $M$ is nearby a $C^{1,\alpha}$ perturbation of $\bC^{p,q}$.
\end{remark}

As a corollary to our main theorem, we obtain the following answer to \ref{q:main}.
\begin{theorem}\label{thm:teaser}
Let $M$ be a multiplicity-one, stationary integral $n$-varifold in $B_1$, which is regular away from $0$, and has a minimizing $(p, q)$-singularity at $0$.  Let $M_i$ be a sequence of stationary, integral varifold in $B_1$, so that $M_i \to M$ as varifolds.  Then for each $i$ sufficiently large, $M_i \llcorner B_{1/2}$ has either an isolated singularity of the same type $(p,q)$, or is entirely regular.

More precisely, we have the following.  Let $S_+$, $S_-$ be leaves of the foliation by minimal surfaces of $\R^{n+1} \setminus \bC^{p,q}$ (as constructed by \cite{HaSi}) which lie in different connected components.  Let $M_{\pm}$ be the smooth manifolds obtained by replacing a small neighborhood of $0$ in $M_0 := \spt M$ with a scaled-down copy of $S_\pm$.  Then, for $i$ sufficiently large, $\spt M_i \cap B_{1/2}$ is a $C^{1,\alpha}$ perturbation of one of $M_0$, $M_+$, $M_-$.
\end{theorem}

We remark that, only from the information that $M_i \to M$, one cannot distinguish a priori whether each $M_i$ is regular or singular.  See Section \ref{sec:main} for a more detailed statement of our main regularity theorem, and for other corollaries.

The main novelty of Theorem \ref{thm:main} is that, unlike previous regularity results for minimal surfaces near isolated singularities (e.g. \cite{AllAlm}, \cite{Simon0}), we do not prescribe a priori the density of the $M_i$ at any point, that is we do not impose them to be singular at the origin, nor at any other point.  As a consequence, even if a minimal surface is close at scale $1$ to a cone with an isolated singularity, the surface itself may be entirely smooth.

We can give a further characterization of the $M_i$ when they are singular: in this case the $M_i$ must be one of the examples of minimal surfaces as constructed by \cite{CaHaSi}.  Finally, we can use our regularity theorem to reprove the rigidity result of \cite{SiSo}, which characterize complete minimal surfaces asymptotic to quadratic cones.

It would be interesting to know whether our results carry over to other (area-minimizing) singularity models.  Unfortunately, the only other known area-minimizing hypercones are of so-called isoparametric type, and for these other examples our techniques do not seem to work. See Section 4 of \cite{SiSo} for further discussions.


\section{Notation and preliminaries}

We work in $\R^{n+1}$.  We denote by $\haus^n$ the $n$-dimensional Hausdorff measure.  Given a subset $A \subset \R^{n+1}$, we let $d_A(x) = \inf_{a \in A} |x - a|$ be the Euclidean distance to $A$, and given $r > 0$ write
\[
B_r(A) := \{ x \in \R^{n+1} : d_A(x) < r \}
\]
for the (open) $r$-tubular neighborhood of $A$.  Similarly $B_r(x)$ is the open $r$-ball centered at $x \in \R$.  If $x = 0$, we may sometimes just write $B_r$.  We write $\overline{B_r(A)}$ for the closed tubular neighborhood or $r$-ball.  We set $\omega_n = \haus^n(\R^n \cap B_1)$ to be the volume of the $n$-dimensional unit ball.  We define the translation/dilation map $\eta_{x, r}(z) := (z - x)/r$.

We may occasionally use the notation of Cheeger: we denote with $\Psi(\eps_1, \ldots, \eps_k | c_n, \ldots, c_N)$ a non-negative function, which for any fixed $c_1, \ldots, c_N$, satisfies
$$
\lim_{\eps_1, \ldots, \eps_k \to 0} \Psi = 0.
$$

We shall always treat graphing functions as scalars.  Given oriented hypersurfaces $M, N$, and an open subset $U \subset N$, if we write
\[
M = \graph_{N}(u)
\]
we mean that $M = \{ x + u(x) \nu_N(x) : x \in U \subset N \}$, where $\nu_N$ is the unit normal of $N$.

Given an (oriented) hypersurface $N$, a function $u : N \to \R^n$, and a $\beta \in (0, 1)$, we define the Holder semi-norm
\[
[u]_{\beta, r} = \sup_{x \neq y \in N \cap B_r \setminus B_{r/2}} \frac{|u(x) - u(y)|}{|x - y|},
\]
and, given an integer $k \geq 0$, the Holder norm
\[
|u|_{k, \beta, r} = [D^k u]_{\beta, r} + \sum_{i=0}^k r^{1-k} \sup_{x \in N \cap B_r \setminus B_{r/2}} |\nabla^i u|.
\]
Typically when we use these norms, $N$ will be conical or nearly conical.

\subsection{Varifolds and first variation}

We are concerned with integral $n$-varifolds that are stationary, or have $L^p$ mean curvature.  Recall that an $n$-varifold $M$ is integral if it has the following structure: there is a countably $n$-rectifiable set $\tilde M$, and a $\haus^n \llcorner \tilde M$-integrable, non-negative $\Z$-valued function $\theta$, so that
\[
M(\phi(x, V)) = \int_{\tilde M} \phi(x, T_x \tilde M) \theta(x)  \,d\haus^n(x), \quad \forall \phi \in C^0_c(\R^{n+1} \times Gr(n, n+1)).
\]
Here $Gr(n, n+1)$ is the Grassmanian bundle, i.e. the space of unoriented $n$-planes in $\R^{n+1}$.  If $N$ is an $n$-manifold, then $N$ induces a natural $n$-varifold in the obvious fashion, which we write as as $[N]$.  Given a proper, $C^1$ map $\eta$, we write $\eta_\sharp M$ for the pushforward of $M$.

We write $\mu_M$ for the mass measure of $M$.  The first variation of $M$ in an open subset $U \subset \R^{n+1}$ is the linear functional
\[
\delta M(X) = \int \mathrm{div}_M(X) d\mu_M, \quad X \in C^1_c(U, \R^{n+1}),
\]
where $\mathrm{div}_M(X)$ is defined for $\mu_M$-a.e. $x$ as follows: if $e_i$ is an orthonormal basis for the tangent space $T_xM$, then
\[
\mathrm{div}_M(X) = \sum_i \langle e_i, D_{e_i} X \rangle.
\]

A integral $n$-varifold $M$ (or surface) is stationary in $U \subset \R^{n+1}$ if $\delta M(X) = 0$ for all $X$ compactly supported in $U$.  $M$ is said to have generalized mean curvature $H_M$, and zero generalized boundary in $U$, if 
\[
\delta M(X) = -\int H \cdot X d\mu_M \quad \forall X \in C^1_c(U, \R^{n+1}),
\]
where $H$ is some $\mu_M$-integrable vector field.

Let $M$ have generalized mean curvature $H_M$ in $B_1$, and zero generalized boundary, and suppose $||H_M||_{L^p(B_1; \mu_M)} \leq \Lambda < \infty$ for some $p > n$.  Then $M$ admits the area monotonicity (see \cite{All})
\begin{equation}\label{eqn:monotonicity}
\left( \frac{\mu_M(B_s(x))}{s^n} \right)^{1/p} \leq \frac{\Lambda}{p-n} (r^{1-n/p} - s^{1-n/p}) + \left( \frac{\mu_M(B_r(x))}{r^n} \right)^{1/p} .
\end{equation}
for any $x \in B_1$, and $0 < s < r < 1-|x|$.  Of course if $M$ is stationary, then $r^{-n} \mu_M(B_r(x))$ is increasing for all $r < 1-|x|$.

Further, Allard's theorem \cite{All} implies $M$ as in the previous paragraph admits the following regularity: there is a $\delta(n, p)$ so that if for some $n$-plane $V^n$ we have
\begin{gather}
\int_{B_1} d_V^2 \,d\mu_M + ||H_M||_{L^p(B_1; \mu_M)}^2 \leq E \leq \delta^2, \notag\\
\mu_M(B_1) \leq 3/2 \omega_n \quad\mbox{and}\quad \mu_M(B_{1/10}) \geq (1/2) \omega_n (1/10)^n,\notag
\end{gather}
then there is a $C^{1,1-n/p}$ function $u : V \cap B_{1/2} \to V^\perp$, so that 
\[
\spt M\cap B_{1/2} = \graph(u) \cap B_{1/2}, \quad |u|_{C^{1,1-n/p}} \leq c(n) E^{1/2}.
\]

\subsection{Jacobi fields}

Given a smooth, oriented minimal hypersurface $N^n$, let us write $\cM_N$ for the mean curvature operator on graphs over $N$, i.e. so that given $u : U \subset N \to \R$, and $x \in U$, then $\cM_N(u)(x)$ denotes the mean curvature of $\graph_U(u)$ at the point $x + u(x)\nu_N(x)$.  Equivalently, $-\cM_N$ is the Euler-Lagrange operator for the area functional on graphs over $N$.  $\cM_N$ is a second-order, quasi-linear elliptic operator
\[
\cM_N(u) = a_N(x, u, \nabla u)^{ij} \nabla^2_{ij} u + b_N(x,u, \nabla u),
\]
whose coefficients $a_N(x, z, p)$, $b_N(x, z, p)$ depend smoothly on $x, z, p$ and the submanifold $N$.

Write $\cL_N$ for the linearization of $\cM_N$ at $u = 0$.  $\cL_N$ is called the Jacobi operator, and any solution $w$ to $\cL_N(w) = 0$ is called a Jacobi field.  $\cL_N$ is a linear, elliptic operator:
\[
\cL_N = \Delta_N + |A_N|^2.
\]
Here $A_N$ is the second fundamental form of $N \subset \R^{n+1}$, and $\Delta_N$ is the connection Laplacian.  If $\phi_t$ is a family of compactly supported diffeomorphisms of $\R^{n+1}$, and $\del_t \phi_t|_{t = 0, x \in N} = f \nu_N$ on $N$, then 
\[
\frac{d^2}{dt^2}\Big|_{t=0} {\rm vol}(\phi_t(N)) = - \int_N f \cL_N f \, d\haus^n.
\]
$N$ is called stable if $\cL_N \leq 0$ when restricted to any compact subset of $N$.

\vspace{5mm}

When $N = \bC$ is a cone, with smooth, compact cross section $\Sigma = \bC \cap S^n$, then we can further decompose
\[
\cL_N = \del_r^2 + (n-1)r^{-1} \del_r + r^{-2} \cL_\Sigma, \quad \cL_\Sigma = \Delta_\Sigma + |A_{\Sigma}|^2,
\]
where $r = |x|$ is the radial distance, $\omega = x/|x|$, and $\Delta_\Sigma$, $A_\Sigma$ are the connection Laplacian, second fundamental form (resp.) of $\Sigma \subset S^n$.

Since $\Sigma$ is compact, there is $L^2(\Sigma)$-orthonormal basis of eigenfunctions $\phi_i$ of $\cL_\Sigma$, with corresponding eigenvalues $\mu_1 < \mu_2 \leq \cdots \to \infty$:
\[
\cL_\Sigma \phi_i + \mu_i \phi_i = 0, \quad \int_\Sigma \phi_i \phi_j d\haus^{n-1} = \delta_{ij}.
\]

By the Rayleigh quotient $\mu_1 \leq -(n-1)$.  On the other hand, when $\bC$ is stable we have $\mu_1 \geq -((n-2)/2)^2$, and we have strict inequality when $\bC$ is strictly stable (see \cite{CaHaSi}).  If we define
\[
\gamma^{\pm}_i = -((n-2)/2) \pm \sqrt{ ((n-2)/2)^2 + \mu_i },
\]
then for any solution $w$ to $\cL_\bC(w) = 0$, with $\bC$ being strictly stable, we can expand in $L^2_{loc}(\bC)$
\[
w(r\omega) = \sum_{i=1}^\infty (a_i^+ r^{\gamma_i^+} + a_i^- r^{\gamma_i^-}) \phi_i(\omega),
\]
where for each $r$ the sum is $L^2(\Sigma)$ orthogonal.

\subsection{Hardt-Simon foliation}\label{sec:hs}

Taking $\bC$, $\Sigma$ as above, then $\bC$ divides $\R^{n+1}$ into two connected, open, disjoint regions $E_+$ and $E_-$.  We can choose an oriented unit normal $\nu_\bC$ for $\bC$, so that $\nu_\bC$ points into $E_+$.

When $\bC$ is area-minimizing, in the sense of currents, then \cite{HaSi} have shown there are \emph{smooth}, area-minimizing hypersurfaces $S_\pm \subset E_\pm$, which are asymptotic to $\bC$.  Moreover, the $S_\pm$ radial graphs, and hence the collection of dilations $\lambda S_\pm$ ($\lambda > 0$) forms a foliation of $E_\pm$ by smooth, area-minimizing hypersurfaces, sometimes called the Hardt-Simon foliation.  Let us orient $S_\pm$ with unit normals $\nu_{S_{\pm}}$ compatible with $\bC$, so that as $|x| \to \infty$, $\nu_{S_{\pm}} \to \nu_\bC$.

When $\bC$ is \emph{strictly} minimizing, then $S_\pm$ decays to $\bC$ like the larger homogeneity $r^{\gamma_1^+}$.  In particular, after a normalization as necessary, there is a radius $R_0 \geq 1$ and $\alpha_0 > 0$ so that
\[
S_\pm  \setminus B_{R_0} = \graph_\bC(v_\pm),
\]
where $v_\pm : \bC \setminus B_{R_0/2} \to \R$ is a smooth function satisfying
\[
v_\pm(r\omega) = \pm r^{\gamma^+_1} f_\pm(r\omega), \quad \sum_{k=0}^2 r^k|\nabla^k (f_\pm - \phi_1)| = O(r^{-\alpha_0}).
\]
For shorthand we will set $\gamma = \gamma_1^+$.  See \cite{HaSi} for details about strictly minimizing.

Given $\lambda \in \R$, define
\[
S_\lambda = \left\{ \begin{array}{l l} \lambda S_+ & \lambda > 0 \\ \bC & \lambda = 0 \\ |\lambda| S_- & \lambda < 0 \end{array} \right. \qquad v_\lambda(r \omega) = \left\{\begin{array}{l l} \lambda v_+(r/\lambda) & \lambda > 0 \\ 0 & \lambda = 0 \\ |\lambda| v_-(r/|\lambda|) & \lambda < 0 \end{array} \right.
\]
so that
\[
S_\lambda \setminus B_{\lambda R_0} = \graph_\bC(v_\lambda).
\]
Let $S_\lambda$ have the same orientation as $S_{\sign(\lambda)}$.  Observe that
\begin{equation}\label{eqn:form-of-v}
v_\lambda(r) = \sign(\lambda) |\lambda|^{1-\gamma} r^\gamma f_{\sign(\lambda)}(r/|\lambda|).
\end{equation}
For shorthand, we will often write $\lambda^{\alpha} := \sign(\lambda) |\lambda|^\alpha$.

The following straightforward Lemma will be useful. 
\begin{lemma}\label{lem:v-diff}
	For $|\mu|, |\lambda| \leq 1$, and $r \geq R \geq R_0$, we have
	\[
	v_\mu(r) - v_\lambda(r) = (1+O( (\max\{|\mu|, |\lambda|\})^{\alpha_0} R^{-{\alpha_0}})) (\mu^{1-\gamma} - \lambda^{1-\gamma})r^{\gamma}.
	\]
	In particular, if $R_0(\bC)$ is sufficiently large, then
	\[
	\frac{1}{4} (\mu^{1-\gamma} - \lambda^{1-\gamma})^2 r^{2\gamma} \leq (v_\mu(r) - v_\lambda(r))^2 \leq 4(\mu^{1-\gamma} - \lambda^{1-\gamma})^2 r^{2\gamma}
	\]
	for all $r \geq \max\{|\mu|, |\lambda|\} R_0$.
\end{lemma}

\begin{proof}
	If $\lambda \neq 0$, $|\lambda| \leq 1$, and $r \geq |\lambda| R$, then we have
	\begin{align*}
	\frac{d}{d\lambda} v_\lambda(r) 
	&= (1-\gamma)|\lambda|^{-\gamma} r^{\gamma} f_{\sign(\lambda)}(r/|\lambda|) - |\lambda|^{-1-\gamma} r^{1+\gamma} f_{\sign(\lambda)}' (r/|\lambda|) \\
	&= (1-\gamma)|\lambda|^{-\gamma} r^\gamma (1 + O(|\lambda|^{\alpha_0} R^{-\alpha_0})) .
	\end{align*}
	If $\mu \lambda \geq 0$, then the required result follows from the above and the fundamental theorem of calculus.
	
	If $\mu\lambda < 0$, $|\lambda| \geq |\mu| > 0$, then we have (recalling our shorthand $\mu^\beta = \sign(\mu)|\mu|^\beta$)
	\begin{align*}
	(v_\mu(r) - v_\lambda(r))^2 
	&= (|\mu|^{1-\gamma} + |\lambda|^{1-\gamma})^2 r^{2\gamma} (1+O(|\lambda|^{\alpha_0} R^{-\alpha_0})) \\
	&= (\mu^{1-\gamma} - \lambda^{1-\gamma})^2 r^{2\gamma} (1+O(|\lambda|^{\alpha_0} R^{-\alpha_0})) .
	\end{align*}
\end{proof}

\subsection{Minimizing quadratic cones}

Take $\bC = \bC^{p,q}$ an area-minimizing quadratic cone.  There are two key properties of $\bC$ which we shall need.  These are proven in \cite[Proposition 2.7]{SiSo}.

\begin{enumerate}
\item $\bC$ is \emph{strictly-minimizing}, so that the foliation decays like $r^{\gamma_1^+}$.  In fact, since $\bC$ is rotationally symmetric, we have that 
\begin{equation}\label{eqn:rot-sym}
v_\pm(r\omega) = \pm r^{\gamma^+_1} f_\pm(r\omega), \quad \sum_{k=0}^2 r^k |\nabla^k (f_\pm - 1)| = O(r^{-\alpha_0}).
\end{equation}
Recall that for shorthand we write $\gamma = \gamma_1^+$.

\item $\bC$ is \emph{strongly integrable}, in the following sense: any solution of $\cL_\bC(w) = 0$ can be written
\[
w(x = r\omega) = \sum_{j \geq 1} a_i^- r^{\gamma_i^-} \phi_i(\omega) + e r^{\gamma_1^+} + (b + Ax) \cdot \nu_\bC(\omega) + \sum_{j \geq 4} a_i^+ r^{\gamma_i^+},
\]
where $e \in \R$, $b \in \R^{n+1}$, and $A$ is a skew-symmetric $(n+1)\times (n+1)$ matrix.  In other words, $\gamma_2^+ = 0$, $\gamma_3^+ = 1$, and the eigenfunctions $\phi_2$, $\phi_3$ are generated by translations, rotations.
\end{enumerate}

Every result in our paper holds for any area-minimizing hypercone satisfying the above two conditions.  Rotational symmetry as in \eqref{eqn:rot-sym} simplifies our computations slightly, but has no bearing on our proof.  We write all our results for quadratic cones because these are the only area-minimizing cones which we can verify as ``strongly integrable.''

\section{Main theorems}\label{sec:main}

\emph{For the duration of this paper}, we fix $\bC = \bC^{p,q}$ to be an area-minimizing quadratic cone, $S_\lambda$ the Hardt-Simon foliation, and we use the notation associated to $\bC$, $S_\lambda$ as introduced in Section \ref{sec:hs}.

Our main theorem is the following, from which Theorem \ref{thm:teaser} follows directly.


\begin{theorem}\label{thm:main}
There are constants $\delta_{1}(\bC)$, $\Lambda_{1}(\bC)$, $c_{1}(\bC)$, $\beta(\bC)$ so that the following holds.  Take $|\lambda| \leq \Lambda_{1}$, and let $M$ be a stationary integral varifold in $B_1$, satisfying
\begin{gather}\label{eqn:main-hyp}
\begin{aligned}
& \quad\quad\quad \quad \quad\quad \quad\int_{B_1} d_{S_\lambda}^2 d\mu_M \leq E \leq \delta_{1}^2,\\ 
&\mu_M(B_1) \leq (3/2) \mu_\bC(B_1), \quad\mbox{and}\quad \mu_M(\overline{B_{1/10}}) \geq (1/2) \mu_\bC(B_{1/10}).
\end{aligned}
\end{gather}
Then there is an $a \in \R^{n+1}$, $q \in SO(n+1)$, $\lambda' \in \R$, with
\begin{equation}\label{eqn:main-concl1}
|a| + |q - Id| + \left|\sign(\lambda')|\lambda'|^{1-\gamma} - \sign(\lambda)|\lambda|^{1-\gamma}\right| \leq c_{1} E^{1/2},
\end{equation}
and a $C^{1,\beta}$ function $u : (a + q(S_{\lambda'})) \cap B_{1/2} \to \R$, so that 
\[
\spt M \cap B_{1/2} = \graph_{a + q(S_{\lambda'})}(u) \cap B_{1/2},
\] 
and  $u$ satisfies the estimates
\begin{equation}\label{eqn:main-concl2}
r^{-1} |u|_{C^0(B_r(a))} + |\nabla u|_{C^0(B_r(a))} + r^{\beta} [\nabla u]_{\beta, B_r(a)} \leq c_{1} r^{\beta} E^{1/2} \quad \forall r \leq 1/2.
\end{equation}

In particular, $M \cap B_{1/2}$ is either smooth, or has an isolated singularity modeled on $\bC$.
\end{theorem}

\begin{remark}
The precise form of the lower bound on $\mu_M(B_{1/10})$ in \eqref{eqn:main-hyp} is of no consequence, nor is the precise ball radius $1/10$.  One could easily assume (for example) $\mu_M(B_{1/10}) \geq v > 0$, and obtain the same conclusions, except that the constants $\delta_{1}$ and $\Lambda_{1}$ would depend on the choice of $v$ also.  The upper bound on $\mu_M(B_1)$ is more important: we require it to be strictly less than $2 \mu_\bC(B_1)$.
\end{remark}

A further characterization is possible in the case when $M$ as in Theorem \ref{thm:main} is singular.  \cite{CaHaSi} have constructed a large class of examples of minimal surfaces in $B_1$, which are singular perturbations of a given minimal cone (see Section \ref{sec:uniqueness}).  In fact, in a sufficiently small neighborhood, these are the only minimal surfaces which are graphical over $\bC$.  It would be interesting to know whether examples like those in \cite{CaHaSi} exist as perturbations over a foliate $S_\lambda$.
\begin{prop}\label{prop:uniqueness}
Let $M$, $\lambda'$ be as in Theorem \ref{thm:main}.  If $\lambda' = 0$, and $E \leq \delta_{2}(\bC)$ is sufficiently small, then $\spt M \cap B_{1/4}$ coincides with one of the graphical solutions as constructed by \cite{CaHaSi}.
\end{prop}

The most interesting consequence of Theorem \ref{thm:main} is that singularities modeled on (minimizing) Simons's cones propagate out their structure not only to a neighborhood of the original surface, but also of \emph{nearby} surfaces.  If these nearby surfaces are not minimal, but instead of $L^p$ mean curvature, then essentially the same structure holds, but with slightly less regularity.  In this sense the minimizing Simons's singularities can be thought of as ``very strongly isolated.''

\begin{corollary}\label{cor:mc}
Given any $p > n$, there are constants $\delta_{3}(\bC)$, $\eps_{3}(p, \bC)$, $\Lambda_{3}(\bC)$, $c_{3}(\bC)$ so that the following holds.  Let $M$ be an integral $n$-varifold in $B_1$ with generalized mean curvature $H_M$, zero generalized boundary, satisfying \ref{eqn:main-hyp} with $\delta_{3}^2$ in place of $E$, and 
\[
\left( \int_{B_1} |H_M|^p d\mu_M \right)^{1/p} \leq \eps_{3}.
\]

Then there are $a \in \R^{n+1}$, $\lambda' \in \R$ so that 
\begin{itemize}
	\item either $\lambda' = 0$, in which case $\spt M \cap B_{1/2}$ is a $C^{1,\beta}$ perturbation of $\bC$;
	\item or $\lambda' \neq 0$, and we can find for every $0 < r \leq 1/2$ a $q_r \in SO(n+1)$, so that $\spt M \cap B_r(a) \setminus B_{r/100}(a)$ is a $C^{1,\beta}$ graph over $a + q_r(S_{\lambda'})$. 
\end{itemize}  
In particular, $M$ is either entirely regular, or has an isolated singularity modeled on $\bC$. 
\end{corollary}

\begin{example}
This Corollary rules out many possible examples of singularity formation.  For example, in an $8$ dimensional manifold this rules out the possibility that $S^3\times S^3$ singularites are collapsing into an $S^2 \times S^4$ singularity, or even worse, that multiple types of isolated singularities are collapsing into and single $S^3\times S^3$ or $S^2 \times S^4$ singularity.
\end{example}

\begin{remark}
We cannot obtain directly that the $q_r$ have a limit as $r \to 0$.  If $\lambda' = 0$, then we can use \cite{AllAlm} to deduce a posteriori that $M \cap B_{1/2}$ is a $C^{1,\alpha}$ perturbation of $\bC$.  If $\lambda' \neq 0$, then we do not need to worry about the limiting behavior of the $q_r$ to deduce that $M \cap B_{1/2}$ is some $C^{1,\alpha}$ perturbation of $S_{\lambda'}$.  However, because we have no control over $q_r$ as $r \to 0$, we cannot obtain any effective estimates on the $C^{1,\alpha}$ map in question.  It would be interesting to resolve this.

\end{remark}

Another direct consequence of our regularity theorem is the following rigidity theorem for area-minimizing surfaces asymptotic to Simons's cones, which was originally proven by Simon-Solomon:

\begin{corollary}[\cite{SiSo}]\label{cor:ss}
Let $M$ be an area-minimizing hypersurface in $\R^{n+1}$, and $\bC$ be a strongly integrable cone, with associated foliation $S_\lambda$.  Suppose there is a sequence of radii $R_i \to \infty$ so that
\[
R_i^{-1} M \to \bC 
\]
in the flat disance.  Then up to translation, rotation, and dilation, $M = \bC, S_{1}, $ or $S_{-1}$.
\end{corollary}

\begin{remark}
We remark that our result is much stronger than the characterization of \cite{SiSo}.  As illustrated by the examples of \cite{CaHaSi}, being close to the Simons's cone at scale $1$ is much weaker than being close on all of $\R^{n+1}$ -- in particular, the latter precludes any modes growing faster than $1$-homogeneous.  One can think of \cite{SiSo} as a Bernstein-type theorem, while our theorem as an Allard-type regularity theorem.
\end{remark}

\section{Outline of proof}

Our strategy is to prove the following excess-decay type theorem (Proposition \ref{prop:decay}): provided both $\lambda$ and $\int_{B_1} d_{S_\lambda}^2 d\mu_M$ are sufficiently small (plus some restrictions on the mass of $M$), then we have a decay estimate of the form
\begin{equation}\label{eqn:outline-decay}
\theta^{-n-2} \int_{B_\theta} d_{a' + q'(S_{\lambda'})}^2 d\mu_M \leq (1/2) \int_{B_1} d_{S_\lambda}^2 d\mu_M.
\end{equation}
That is, provided $S_\lambda$ is sufficiently close to the cone $\bC$, and we are sufficiently $L^2$ close to $S_\lambda$, then after a translation/rotation/dilation as necessary, our $L^2$ distance to the foliation improves at a smaller scale.

We can continue iterating \eqref{eqn:outline-decay} while $S_\lambda$ is scale-invariantly close to $\bC$, and obtain a decay of the form: there is an $a'' \in \R^{n+1}$, $q'' \in SO(n+1)$, and $\lambda'' \in \R$ so that
\begin{equation}\label{eqn:outline-decay2}
r^{-n-2} \int_{B_r} d_{a'' + q''(S_{\lambda''})}^2 d\mu_M \leq C r^{2\beta} \quad \forall c |\lambda''| \leq r \leq 1.
\end{equation}
By two straightforward contradiction arguments (one for $M$ close to $S_\lambda \cap B_1 \setminus B_{1/100}$ with $\lambda$ small, and one for $M$ close to $S_{\pm 1} \cap B_{c}$), we can use \cite{All} with \eqref{eqn:outline-decay2} to deduce $\spt M \cap B_{1/2}$ is graphical over $a'' + q''(S_{\lambda''})$.

\vspace{5mm}

We would like to prove \eqref{eqn:outline-decay} by contradiction,with an argument that loosely resembles the original ``excess decay'' proof due to De Giorgi and, as implemented in a fashion closer to our style, \cite{AllAlm}, \cite{Simon1}, \cite{Simon-CAG}.  Briefly, we would like to suppose \eqref{eqn:outline-decay} fails for some sequence $M_i$ and $\lambda_i \to 0$, $E_i = \int_{B_1} d_{S_{\lambda_i}}^2 d\mu_{M_i} \to 0$.  Then over larger and larger annuli $B_{1/2} \setminus B_{\tau_i}$ ($\tau_i \to 0$) we can write $\spt M = \graph_{S_{\lambda_i}}(u_i)$.  If we rescale $v_i = E_i^{-1/2} u_i$, then the $v_i$ have uniformly bounded $||v_i||_{L^2(B_1)}$, and after passing to a subsequence we get convergence
\[
v_i \to w\quad \mbox{with} \quad \cL_\bC(w) = 0\,.
\]
The idea now, in vague terms, is to use good decay properties for solutions to the linearized problem $\cL_\bC(w) = 0$, to prove good decay for solutions to the non-linear problem $\cM_\bC(u) = 0$, that is we'd like to arrange so that $w = O(r^{1+\eps})$, and then use this to deduce $L^2$ decay of the $u_i$ as in \eqref{eqn:outline-decay}.

To ensure this argument works we need to 
\begin{itemize}
	\item[(i)] ensure the decaying norm for the non-linear problem is comparable to the linear one (a.k.a. non-concentration of $L^2$ norm at singularities), that is for any $\rho$ small
	\[
	E_i^{-1} \int_{B_\rho} d_{S_{\lambda_i}}^2 d\mu_{M_i} \to ||w||_{L^2(\bC \cap B_\rho)}^2\,,
	\]
	\item[(ii)] prove good decay for the linear problem (a.k.a. killing bad homogeneities through integrability), that is $w = O(r^{1+\eps})$. 
\end{itemize}

The latter issue is where the concept of \emph{integrability} arises.  A minimal cone $\bC$ is called integrable if every $1$-homogeneous Jacobi field arises from a $1$-parameter family of minimal cones.  The idea of \cite{All}, \cite{AllAlm} is that, under suitable density assumptions in the argument above, one can typically show that $w=O(r)$, but one needs it to grow like $r^{1+\eps}$.  For a cone with an isolated singularity, the homogeneities are discrete, and so provided $\bC$ is integrable, one can rewrite the minimal surface as a graph over a slightly adjusted cone, chosen to cancel the $r$ term in the Fourier expansion of $w$.  

The main novelty in our approach is in treating the foliation $S_\lambda$ as a direction of integrability.  In other words, we are relaxing the original notion of integrability, as a movement through cones, to allow one to push off the cone into families of entirely smooth hypersurfaces, and in particular we are allowing for a notion of integrability in which the singularity behavior changes.  In order to handle this we require new decay and non-concentration estimates for minimal surfaces near an arbitrary foliate $S_\lambda$ without any structural assumption on $M$.  This is the content of Theorem \ref{thm:non-conc}.  

More precisely, the key observation is that the foliation is generated by a positive Jacobi field of the form
\[
v(r) = r^{\gamma}, \quad 0 > \gamma > - (n-2)/2,
\]
and this Jacobi field has itself good $L^2$ decay:
\begin{equation}\label{eqn:outline-jacobi-decay}
\int_{B_\rho \cap \bC} v^2 d\haus^n \leq c \rho^2 \int_{B_1 \cap \bC} v^2 d\haus^n.
\end{equation}

To deal with point (i), we use the maximum principle to ``trap'' $M$ between two foliates, and thereby show that $M$ cannot diverge from a given $S_\lambda$ any faster than the foliation itself.  This allows us to prove that $\int_{B_\rho} d_{S_\lambda}^2 d\mu_M$ has a decay similar to \eqref{eqn:outline-jacobi-decay}, and hence no $L^2$ norm can accumulate near the non-graphical region (away from $0$ we of course have strong $L^2$ convergence since the $v_i$ converge smoothly there).

To deal with point (ii), we can prove that the $v_i$, and hence the resulting Jacobi field $w$, grow at least as fast as $v(r) = r^\gamma$ as $r$ increases.  Using the strongly integrable nature of $\bC$, we can then deduce that $w$ looks like
\[
w(x = r\omega) = e r^\gamma + (b + Ax)\cdot \nu_\bC + O(r^{1+\eps}) ,
\]
for $f \in \R$, $b \in \R^{n+1}$, $A$ skew-symmetric.  In other words, $w$ has the growth we require except for terms generated by moving into the foliation, translation, and rotation. By replacing the $S_{\lambda_i}$ with a new sequence of foliates $a_i + q_i(S_{\lambda_i'})$, and repeating the above contradiction argument with this new sequence, we can arrange so that these three lower homogeneities disappear, and thereby deduce $w = O(r^{1+\eps})$.

\end{comment}

\section{Non-concentration of $L^2$-Excess}

Our main theorem of this section is the following.  It is spiritually similar to Theorem 2.1 in \cite{Simon-CAG}, except we are proving non-concentration with respect to an arbitrary foliate $S_\lambda$ instead of just $\bC$, and we additionally obtain a pointwise decay estimate on the graphing function.  Recall the shorthand $\gamma = \gamma_1^+$.

\begin{theorem}\label{thm:non-conc}
For every $0 < \tau < 1/4$, $\beta > 0$, there is $\Lambda_{4}(\bC, \tau)$, $\eps_{4}(\bC, \beta, \tau)$, $c_{4}(\bC)$ so that the following holds: if $|\lambda| \leq \Lambda_{4}$ and $M$ is a stationary integral $n$-varifold in $B_1$ satisfying
\begin{equation}\label{eqn:non-conc-hyp}
\int_{B_1} d_{S_\lambda}^2 d\mu_M \leq \eps_{4}^2, \quad \mu_M(B_1) \leq (7/4) \mu_\bC(B_1), \quad \mu_M(\overline{B_{1/10}}) \geq \frac{1}{2} \mu_\bC(B_{1/10}),
\end{equation}
then there is a smooth function $u : S_\lambda \cap B_{1/2} \setminus B_{\tau/2} \to \R$ so that
\begin{equation}\label{eqn:non-conc-concl1}
\spt M \cap B_{1/2} \setminus B_\tau = \graph_{S_\lambda}(u) \cap B_{1/2} \setminus B_\tau, \quad \sum_{k=0}^3 r^{k-1} |\nabla^k u| \leq \beta .
\end{equation}

For every $\tau \leq \rho \leq 1/4$, we have:
\begin{align}
\int_{B_\rho \setminus B_\tau} u^2 d\mu_\bC + \int_{B_\rho} d_{S_\lambda}^2 d\mu_M 
&\leq c_{4} \rho^{n+2\gamma} \int_{B_1} d_{S_{\lambda}}^2 d\mu_M \label{eqn:non-conc-concl2} \\
&\leq c_{4} \rho^2 \int_{B_1} d_{S_{\lambda}}^2 d\mu_M \nonumber
\end{align}
Moreover, $u$ has the following $L^\infty$ decay bound:
\begin{equation}\label{eqn:non-conc-concl3}
\sup_{S_\lambda \cap  \del B_r} u^2 \leq c_{4} r^{2\gamma} \int_{B_1} d_{S_\lambda}^2 \qquad \forall r \in (\tau, 1/4).
\end{equation}
\end{theorem}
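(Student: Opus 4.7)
My approach is to prove the three conclusions in sequence: first I would obtain the graphicality statement \eqref{eqn:non-conc-concl1} from Allard's regularity applied off the spine, then establish the $L^2$ decay \eqref{eqn:non-conc-concl2} by a Simon-type blow-up argument against the classification of Jacobi fields on $\bC$, and finally deduce the pointwise bound \eqref{eqn:non-conc-concl3} from \eqref{eqn:non-conc-concl2} by interior elliptic estimates on dyadic annuli.

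\textbf{Graphicality.} On each annulus $A_r := B_{2r}\setminus B_{r/2}$ with $r\in[\tau,1/2]$, I would use that $S_\lambda$ is a smooth minimal submanifold $C^3$-close to $\bC\setminus\spine(\bC)$, and that the $L^2$ excess combined with the mass upper bound transfers, via a standard covering argument, to $L^\infty$ closeness of $\spt M$ to $S_\lambda$. Allard's regularity theorem then yields a smooth graph $u$ on each annulus; patching produces the single function $u$ on $S_\lambda\cap(B_{1/2}\setminus B_{\tau/2})$, and Schauder estimates on the graphing quasilinear PDE upgrade $L^2$-smallness to the $C^3$ bound in \eqref{eqn:non-conc-concl1}. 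The lower density assumption $\mu_M(\overline{B_{1/10}})\geq \tfrac12\mu_\bC(B_{1/10})$ rules out disappearance of sheets.

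\textbf{$L^2$ decay by blow-up.} To prove \eqref{eqn:non-conc-concl2} I would argue by contradiction: assume there exist sequences $M_i$, $\lambda_i$ and radii $\rho_i\in[\tau,1/4]$ with normalized excess $E_i:=\int_{B_1}d_{S_{\lambda_i}}^2\,d\mu_{M_i}\to 0$ that violate \eqref{eqn:non-conc-concl2} at scale $\rho_i$. Setting $v_i := u_i/\sqrt{E_i}$, a Caccioppoli estimate for the excess (applied to the minimal-surface equation for $u_i$ on $S_{\lambda_i}$) combined with interior Schauder gives uniform $C^3$ bounds for $v_i$ on compact subsets of $\bC\setminus\{0\}$. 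Extracting a subsequence, $v_i\to w$ where $w$ is a Jacobi field on $\bC\setminus\spine(\bC)$ with $\int_{B_1} w^2\,d\mu_\bC\leq 1$. By construction of the foliate and the optimal choice of $\lambda_i$, $w$ is $L^2$-orthogonal on every sphere $\del B_r$ to the generators $\del_\lambda S_\lambda|_{\lambda=0}$, which span exactly the Jacobi modes of growth rate $\leq 0$. Separation of variables on the link of $\bC$ then writes $w=\sum_j r^{\alpha_j}\phi_j(\omega)$ with all $\alpha_j\geq \gamma=\gamma_1^+$, so $\int_{B_\rho}w^2\leq C\rho^{n+2\gamma}$, contradicting the assumed violation.

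\textbf{Pointwise decay and main obstacle.} For \eqref{eqn:non-conc-concl3}, I would apply Moser's mean value inequality to $u$ (a small perturbation of the Jacobi operator) on each $A_r$ rescaled to unit size, yielding $\sup_{\del B_r} u^2 \leq C r^{-n}\int_{A_r} u^2\,d\mu_\bC$; combining with \eqref{eqn:non-conc-concl2} gives the claim. The main obstacle is the identification step: that the blow-up $w$ is a Jacobi field whose spherical-harmonic support avoids the non-positive growth rates. This rests on a careful setup of $\{S_\lambda\}$ earlier in the paper so that the infinitesimal generators span exactly the neutral/unstable modes, and on a spectral decomposition of the Jacobi operator on the link of $\bC$. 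A secondary subtlety is preventing $v_i$ from concentrating $L^2$ mass near the origin in the limit; the non-collapsing hypothesis in \eqref{eqn:non-conc-hyp} is designed precisely to avoid this.
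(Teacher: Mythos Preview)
Your approach is genuinely different from the paper's and, as written, has two real gaps.

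The paper does not use a blow-up argument. It argues by barriers: writing $M = \graph_\bC(h)$ on the annulus, it defines $\lambda_\rho^\pm$ as the extremal parameters for which $S_{\lambda_\rho^\pm}$ touches the graph at radius $\rho$, and invokes the maximum principle to trap $\spt M \cap B_\rho$ between the leaves $S_{\lambda_\rho^-}$ and $S_{\lambda_\rho^+}$. A quantitative estimate on the foliation, $|v_{\lambda_1}(r) - v_{\lambda_2}(r)| \sim r^\gamma\,|\lambda_1^{1-\gamma} - \lambda_2^{1-\gamma}|$ (Lemma~\ref{lem:v-diff}), combined with the monotonicity of $\lambda_\rho^\pm$ in $\rho$, then gives the pointwise bound \eqref{eqn:non-conc-concl3} \emph{directly}; the $L^2$ estimate \eqref{eqn:non-conc-concl2} follows by integrating it. So the order is the reverse of yours: $L^\infty$ first, then $L^2$.

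The first gap in your scheme is that the blow-up only sees the graphing function $u$ on the annulus $B_{1/2}\setminus B_{\tau/2}$, and hence says nothing about the term $\int_{B_\tau} d_{S_\lambda}^2\,d\mu_M$ appearing in \eqref{eqn:non-conc-concl2}, where $M$ is not known to be graphical. The paper controls that inner region precisely via the trapping between leaves; your argument has no substitute for this. The second gap is the orthogonality step. The parameter $\lambda$ is \emph{given} in the hypothesis, not chosen optimally, so nothing forces the limit $w$ to be orthogonal to the foliate generator; and in any case that generator is the single mode $r^{\gamma}\phi_1$ with $\gamma = \gamma_1^+$, which is exactly the borderline rate in \eqref{eqn:non-conc-concl2}, not a family spanning ``the Jacobi modes of growth rate $\leq 0$''. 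What must actually be excluded are the $\gamma_j^-$ modes, and the bound $\int_{B_1} w^2 \leq 1$ alone does not do this when $\gamma_j^- > -n/2$. The lower mass hypothesis in \eqref{eqn:non-conc-hyp} only rules out sheet loss in the Allard step; it does not prevent $L^2$ concentration of $v_i$ at the origin. In Simon's setting and here, excluding those modes ultimately relies on the same barrier/maximum-principle input that the paper uses directly.
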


\begin{proof}
Let $R_0(\bC)$ be as in Lemma \ref{lem:v-diff}.  Taking $\Lambda_{4}(\bC, \tau)$ sufficiently small, we can assume $\Lambda_{4} R_0 < \tau/100$, and
\begin{equation}\label{eqn:v-bounds}
\sum_{k=0}^2 r^{k-1} |\nabla^k v_\lambda| \leq \beta \quad \forall r \in (\tau/100, 1).
\end{equation}
By a straightforward contradiction argument, allowing $|\lambda| \leq \Lambda_{4}$ to vary, we get that (taking $\eps_{4}(\bC, \Lambda_{4}, \tau, \beta)$ small):
\begin{equation}\label{eqn:M-bounds}
\spt M \cap B_{3/4} \setminus B_{\tau/10} = \graph_{S_\lambda}(u), \quad \sum_{k=0}^3 r^{k-1} |\nabla^k u| \leq \beta.
\end{equation}

Indeed, otherwise there is a sequence of stationary integral $n$-varifolds $M_i$, and numbers $\eps_i \to 0$, $\lambda_i \in [-\Lambda_{4}, \Lambda_{4}]$, for which  \eqref{eqn:non-conc-hyp} holds but \eqref{eqn:M-bounds} fails.  We can without loss assume $\lambda_i \to \lambda$, for some $|\lambda| \leq \Lambda_{4}$.  By compactness of stationary varifolds with bounded mass, we can pass to a subsequence (also denoted $i$), and get varifold convergence $M_i \to M$ for some stationary integral $n$-varifold $M$.  The resulting $M$ satisfies
\begin{equation}\label{eqn:M_i-limit-cond}
\int_{B_1} d_{S_\lambda}^2 d\mu_M = 0, \quad \mu_M(B_1) \leq (7/4) \mu_\bC(B_1), \quad \mu_M(B_{2/10}) \geq (1/2) \mu_\bC(B_{1/10}).
\end{equation}
The constancy theorem implies that $M = k [S_\lambda]$, for some integer $k$.  The lower bound of \eqref{eqn:M_i-limit-cond} implies $k \geq 1$.  Ensuring $\Lambda_{4}(\bC)$ is sufficiently small, the upper bound in \eqref{eqn:M_i-limit-cond} implies $k \leq 1$.  So in fact $M_i \to [S_\lambda]$, and hence by Allard's theorem convergence is smooth on compact subsets of $B_1 \setminus \{0\}$.  This proves our assertion.

It will be more convenient in this proof to work with graphs over $\bC$.  By a similar contradiction argument as above, we have (again taking $\eps_{4}(\bC, \tau, \beta)$, $\Lambda_4(\bC, \tau, \beta)$ sufficiently small):
\begin{equation}\label{eqn:h-bounds}
\spt M \cap B_{3/4} \setminus B_{\tau/10} = \graph_\bC(h), \quad \sum_{k=0}^3 r^{k-1} |\nabla^k h| \leq \beta.
\end{equation}

Ensuring $\beta(\bC, \tau)$ is sufficiently small, $u$ is effectively equivalent to $h - v_\lambda$.  Precisely, if $r\omega \in \bC$, and $x = r\omega + v_\lambda(r) \nu_\bC(r\omega) \in S_\lambda$, then an elementary computation shows that
\[
|u(x)\nu_{S_\lambda}(x) - (h(r\omega) - v_\lambda(r))\nu_\bC(r\omega)| \leq \Psi(\beta | \bC, \tau) |h(r\omega) - v_\lambda(r)|.
\]
Choosing a possibly small $\beta(\bC, \tau)$, this implies
\[
\sup_{S_\lambda \cap \del B_r} |u| \leq 2\sup_{\bC \cap B_{2r} \setminus B_{r/2}} |h - v_\lambda| \qquad \forall r \in (\tau/2, 1/4)\,,
\]
and
\begin{align*}
\int_{B_r \setminus B_{\tau/2}} u^2 d\mu_{S_\lambda} \leq 2 \int_{B_{2r} \setminus B_{\tau/4}} |h - v_\lambda|^2 d\mu_\bC \\
\int_{B_r \setminus B_{\tau/2}} |h - v_\lambda|^2 d\mu_\bC \leq 2 \int_{B_{2r} \setminus B_{\tau/4}} d_{S_\lambda}^2 d\mu_M \\
\int_{B_r \setminus B_{\tau/2}} d_{S_\lambda}^2 d\mu_M \leq 2 \int_{B_{2r} \setminus B_{\tau/4}} u^2 d\mu_{S_\lambda}.
\end{align*}
So we can prove the required estimates for $h - v_\lambda$ instead of $u$.

For $\rho \in (\tau/10, 3/4)$, define
\begin{align*}
\lambda_\rho^+ = \inf \{ \mu : v_\mu(\rho\omega) \geq h(\rho\omega) \quad \forall \omega \in \Sigma \} \\
\lambda_\rho^- = \sup \{ \mu : v_\mu(\rho\omega) \leq h(\rho\omega) \quad \forall \omega \in \Sigma \} .
\end{align*}
From \eqref{eqn:form-of-v}, \eqref{eqn:h-bounds}, we have $|\lambda_\rho^\pm| = \Psi(\beta | \bC, \tau)$, and so ensuring $\beta(\bC, \tau)$ is sufficiently small, $\lambda^\pm_\rho R_0 < \tau/10$ for all admissible $\rho$.  By the maximum principle (e.g. \cite{SoWh}), we have that $\spt M \cap B_\rho$ is trapped between $S_{\lambda_\rho^-}$ and $S_{\lambda_\rho^+}$.  This implies that $\lambda_\rho^+$ is increasing in $\rho$, while $\lambda_\rho^-$ is decreasing in $\rho$, and
\begin{equation}\label{eqn:v-bound}
v_{\lambda^-\rho}(r) \leq h(r\omega) \leq v_{\lambda^+_\rho}(r) \quad \forall r \in (\tau/2, \rho)
\end{equation}

Both $h$ and $v_\lambda$ solve the minimal surface equation over $\bC$, and on any compact subset of $\bC \cap B_1 \setminus \{0\}$ have uniformly bounded derivatives.  Therefore the difference $h - v_\lambda$ solves a linear, second order, uniformly elliptic operator.  So by standard iteration techniques at scale $r$, we get
\begin{align}\label{eqn:H-diff-bound}
\sup_\omega |h(r\omega) - v_\lambda(r)|^2 \leq c(\bC) r^{-n} \int_{B_{2r} \setminus B_{r/2}} |h - v_\lambda|^2 d\mu_\bC \quad \forall \tau/5 < r < 1/4 .
\end{align}

Since $\lambda_\rho^+$ is increasing in $\rho$, $\lambda_\rho^-$ is decreasing in $\rho$, and $\lambda^-_\rho \leq \lambda^+_\rho$, we get that
\begin{equation}\label{eqn:max-incr}
\max \{ ((\lambda^+_\rho)^{1-\gamma} - \lambda^{1-\gamma})^2, ((\lambda^-_\rho)^{1-\gamma} - \lambda^{1-\gamma})^2 \}
\quad\mbox{is increasing in $\rho$}.
\end{equation}

For any $\tau/5 < r < \rho < 1/4$, we have by Lemma \ref{lem:v-diff}, \eqref{eqn:v-bound}, \eqref{eqn:max-incr}, and \eqref{eqn:H-diff-bound}:
\begin{align}
|h(r\omega) - v_\lambda(r)|^2
&\leq 2 \max \{ (v_{\lambda^+_\rho}(r) - v_\lambda(r))^2, (v_{\lambda^-_\rho}(r) - v_\lambda(r))^2 \} \nonumber \\
&\leq c r^{2\gamma} \max \{ ((\lambda^+_\rho)^{1-\gamma} - \lambda^{1-\gamma})^2, ((\lambda^-_\rho)^{1-\gamma} - \lambda^{1-\gamma})^2 \} \nonumber \\
&\leq c r^{2\gamma} \max \{ ((\lambda^+_{1/4})^{1-\gamma} - \lambda^{1-\gamma})^2, ((\lambda^-_{1/4})^{1-\gamma} - \lambda^{1-\gamma})^2 \} \nonumber\\
&\leq c r^{2\gamma} \max \{ (v_{\lambda^+_{1/4}}(1/4) - v_\lambda(1/4))^2, (v_{\lambda^-_{1/4}}(1/4) - v_\lambda(1/4))^2 \} \nonumber\\
&= c r^{2\gamma} \max\{ (\sup_\omega h(\omega/4) - v_\lambda(1/4))^2, (\inf_\omega h(\omega/4) - v_\lambda(1/4))^2 \} \nonumber\\
&= c r^{2\gamma}  \sup_\omega |h(\omega/4) - v_\lambda(1/4)|^2 \nonumber\\
&\leq c r^{2\gamma} \int_{B_{1/2}\setminus B_{1/8}} |h - v_\lambda|^2 d\mu_\bC, \label{eqn:h-v-l2-bounds}
\end{align}
where $c = c(\bC)$.  This implies the estimate \eqref{eqn:non-conc-concl3}.

Integrating this relation in $r \in (\tau/2, \rho)$, gives:
\begin{align*}
\int_{B_\rho \setminus B_{\tau/2}} |h - v_\lambda|^2 d\mu_\bC 
&\leq \frac{c(\bC)}{n+2\gamma} \rho^{n+2\gamma} \int_{B_1} d_{S_{\lambda}}^2 d\mu_M \\
&\leq c(\bC) \rho^2 \int_{B_1} d_{S_\lambda}^2 d\mu_M, 
\end{align*}
since $2\gamma + n \geq -(n-2) + n \geq 2$.  By the bounds \eqref{eqn:v-bounds}, \eqref{eqn:h-bounds}, we have
\begin{equation}\label{eqn:annulus-l2}
\int_{B_\rho \setminus B_\tau} d_{S_\lambda}^2 d\mu_M \leq c \rho^2 \int_{B_1} d_{S_\lambda}^2 d\mu_M.
\end{equation}

We focus now on proving the final part of \eqref{eqn:non-conc-concl2}, i.e. the excess in the ball $B_\tau$.  Since $S$ is graphical over $\bC$ near $\del B_{R_0}$, we have
\[
d_H(S_\lambda \cap B_{\max\{|\lambda|, |\mu|\}R_0}, S_\mu \cap B_{\max\{|\lambda|, |\mu|\}R_0}) \leq c(\bC) |\lambda - \mu|.
\]
Let $\lambda_\tau = \max\{|\lambda^+_\tau|, |\lambda^-_\tau|\}$.  Then, recalling how $\spt M \cap B_\tau$ is trapped between $S_{\lambda^+_\tau}$ and $S_{\lambda^-_\tau}$, we get
\begin{align}
&\int_{B_{\lambda_\tau R_0}} d_{S_\lambda}^2 d\mu_M \nonumber\\
&\leq \max\{ d_H(S_{\lambda^+_\tau} \cap B_{\lambda_\tau R_0}, S_{\lambda} \cap B_{\lambda_\tau R_0})^2, d_H(S_{\lambda^-_\tau} \cap B_{\lambda_\tau R_0}, S_\lambda \cap B_{\lambda_\tau R_0})^2\} \mu_M(B_{\lambda_\tau R_0}) \nonumber\\
&\leq c(\bC) \lambda_\tau^n \max\{ (\lambda_\tau^+ - \lambda)^2, (\lambda_\tau^- - \lambda)^2 \} \nonumber \\
&\leq c(\bC) \tau^{n+2\gamma}  \max \{ ((\lambda^+_\tau)^{1-\gamma} - \lambda^{1-\gamma})^2, ((\lambda^-_\tau)^{1-\gamma} - \lambda^{1-\gamma})^2 \} . \label{eqn:l2-inside-lambda}
\end{align}
The last line follows because there is a constant $c(n)$ so that whenever $|\mu|, |\lambda| \leq 1$, we have (recall $\gamma < 0$)
\[
(\mu^{1-\gamma} - \lambda^{1-\gamma})^2 \geq \frac{1}{c(n)} \max \{ |\mu|, |\lambda|\}^{-2\gamma} (\mu - \lambda)^2 .
\]

Choose $I$ so that $2^I \lambda_\tau R_0 \leq \tau < 2^{I+1} \lambda_\tau R_0$.  We compute:
\begin{align}
&\int_{B_\tau \setminus B_{\lambda_\tau R_0}} d_{S_\lambda}^2 d\mu_M \nonumber\\
&\leq \sum_{i=0}^{I} \int_{B_{2^{i+1} \lambda_\tau R_0} \setminus B_{2^i \lambda_\tau R_0}} d_{S_\lambda}^2 d\mu_M \nonumber\\
&\leq \sum_{i=0}^I \sup_{ 2^i \lambda_\tau R_0 \leq r \leq 2^{i+1} \lambda_\tau R_0} \max\{ (v_{\lambda_\tau^+}(r) - v_\lambda(r))^2, (v_{\lambda^-_\tau}(r) - v_\lambda(r))^2 \} \mu_M(B_{2^{i+1}\lambda_\tau R_0}) \nonumber\\
&\leq \sum_{i=0}^I c(\bC) (2^i \lambda_\tau R_0)^{2\gamma} \max \{ ((\lambda^+_\tau)^{1-\gamma} - \lambda^{1-\gamma})^2, ((\lambda^-_\tau)^{1-\gamma} - \lambda^{1-\gamma})^2 \} (2^i \lambda_\tau R_0)^n \nonumber\\
&\leq c(\bC) \tau^{n+2\gamma} \max \{ ((\lambda^+_\tau)^{1-\gamma} - \lambda^{1-\gamma})^2, ((\lambda^-_\tau)^{1-\gamma} - \lambda^{1-\gamma})^2 \} . \label{eqn:l2-annulus}
\end{align}

Combining \eqref{eqn:l2-inside-lambda}, \eqref{eqn:l2-annulus}, with the computations of \eqref{eqn:h-v-l2-bounds}, we obtain
\begin{align*}
\int_{B_\tau} d_{S_\lambda}^2  d\mu_M
&\leq c(\bC) \tau^{n+2\gamma} \max \{ ((\lambda^+_\tau)^{1-\gamma} - \lambda^{1-\gamma})^2, ((\lambda^-_\tau)^{1-\gamma} - \lambda^{1-\gamma})^2 \} \\
&\leq c(\bC) \tau^2 \int_{B_1} d_{S_\lambda}^2 d\mu_M.
\end{align*}
Together with \eqref{eqn:annulus-l2}, this gives the required estimate \eqref{eqn:non-conc-concl2}.
\end{proof}

The following corollary will also be useful.
\begin{corollary}\label{cor:change-lambda}
There is a $\Lambda_{5}(\bC)$ so that if $|\lambda|, |\lambda'| \leq \Lambda_{5}$, and $M$ satisfies the hypotheses of Theorem \ref{thm:non-conc}, then
\[
\int_{B_1} d_{S_{\lambda'}}^2 d\mu_M \leq c(\bC) \int_{B_1} d_{S_{\lambda}}^2 d\mu_M + c(\bC) ( \lambda^{1-\gamma} - (\lambda')^{1-\gamma})^2.
\]
\end{corollary}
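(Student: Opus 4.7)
The plan is to establish a pointwise comparison $d_{S_{\lambda'}}(x)^2 \le c\, d_{S_\lambda}(x)^2 + c\, (\mathrm{gap}(x))^2$ on $B_1$ and then integrate against $\mu_M$.  Set $\rho_0 := \max(|\lambda|,|\lambda'|) R_0$, with $R_0$ as in Lemma~\ref{lem:v-diff}, and choose $\Lambda_5(\bC)$ small enough that all earlier constructions apply and $\rho_0 \le 1/4$.  For $x \in B_1$, let $y = y(x) \in S_\lambda$ be a nearest point; by the triangle inequality $d_{S_{\lambda'}}(x) \le d_{S_\lambda}(x) + d_{S_{\lambda'}}(y)$, so the game is to control $d_{S_{\lambda'}}(y)$.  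When $|y| \ge \rho_0$, both leaves are graphs $v_\lambda, v_{\lambda'}$ over $\bC$ and Lemma~\ref{lem:v-diff} gives $d_{S_{\lambda'}}(y) \le c(\bC)\,|y|^{\gamma} |\lambda^{1-\gamma} - (\lambda')^{1-\gamma}|$; when $|y| < \rho_0$, the Hausdorff bound $d_H(S_\lambda \cap B_{C\rho_0}, S_{\lambda'} \cap B_{C\rho_0}) \le c(\bC)|\lambda - \lambda'|$ (the one recalled in the proof of the main theorem), combined with the elementary conversion $|\lambda - \lambda'| \le c(n)\,\max(|\lambda|,|\lambda'|)^{\gamma}|\lambda^{1-\gamma}-(\lambda')^{1-\gamma}|$ from the commented remark at the end of that proof, yields $d_{S_{\lambda'}}(y) \le c \rho_0^{\gamma}|\lambda^{1-\gamma}-(\lambda')^{1-\gamma}|$.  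Combining, for every $x\in B_1$
\[
d_{S_{\lambda'}}(x)^2 \le 2\, d_{S_\lambda}(x)^2 + c(\bC)\, \max(|y(x)|,\rho_0)^{2\gamma}\,\bigl(\lambda^{1-\gamma}-(\lambda')^{1-\gamma}\bigr)^2.
\]

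I then split $B_1$ into three subregions.  (i) Inside $B_{\rho_0}$, the pointwise gap is just $c|\lambda-\lambda'|$, so the contribution is $c|\lambda-\lambda'|^2 \mu_M(B_{\rho_0}) \le c\rho_0^n |\lambda-\lambda'|^2 \le c \rho_0^{n+2\gamma}(\lambda^{1-\gamma}-(\lambda')^{1-\gamma})^2 \le c(\lambda^{1-\gamma}-(\lambda')^{1-\gamma})^2$, using the conversion inequality together with $n+2\gamma \ge 2 > 0$ and $\rho_0 \le 1$.  (ii) In the ``good'' outer region $\{x \in B_1 \setminus B_{\rho_0}: d_{S_\lambda}(x) \le |x|/2\}$, one has $|y(x)| \ge |x|/2$ so $\max(|y|,\rho_0)^{2\gamma} \le c\max(|x|,\rho_0)^{2\gamma}$; a dyadic decomposition using the monotonicity bound $\mu_M(B_r) \le c(\bC)\,r^n$ and $n+2\gamma > 0$ shows $\int_{B_1} \max(|x|,\rho_0)^{2\gamma}\, d\mu_M \le c(\bC)$.  (iii) In the ``bad'' outer region where $d_{S_\lambda}(x) > |x|/2$, one has $|x|\ge \rho_0$, and Chebyshev gives $\mu_M(\mathrm{bad}) \le 4\rho_0^{-2}\int d_{S_\lambda}^2\, d\mu_M$; plugging in $\max(|y|,\rho_0)^{2\gamma} \le \rho_0^{2\gamma}$ produces a contribution $c\rho_0^{2\gamma-2}(\lambda^{1-\gamma}-(\lambda')^{1-\gamma})^2 \int d_{S_\lambda}^2 d\mu_M$, and since $(\lambda^{1-\gamma}-(\lambda')^{1-\gamma})^2 \le c\max(|\lambda|,|\lambda'|)^{2-2\gamma}$ and $\rho_0^{2\gamma-2} \le c\max(|\lambda|,|\lambda'|)^{2\gamma-2}$, the two powers of $\max(|\lambda|,|\lambda'|)$ cancel, so the bad contribution is absorbed into $c(\bC)\int d_{S_\lambda}^2\, d\mu_M$.

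The main obstacle is case (iii): the weight $|y(x)|^{2\gamma}$ can blow up for $x$ far from $S_\lambda$ even though $|x| \ge \rho_0$, so one must trade the smallness of the bad set (via Chebyshev) against this blowup.  The saving grace is the precise scaling $\rho_0 \sim \max(|\lambda|,|\lambda'|)$, which makes the blowup in $\rho_0^{2\gamma-2}$ and the trivial upper bound $(\lambda^{1-\gamma}-(\lambda')^{1-\gamma})^2 \le c\max(|\lambda|,|\lambda'|)^{2-2\gamma}$ cancel exactly; the remaining cases are just integrability of $\max(|x|,\rho_0)^{2\gamma}$ against $\mu_M$, which follows from the stationary monotonicity formula and the standing inequality $n + 2\gamma > 0$.
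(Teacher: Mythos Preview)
Your argument is correct and takes a genuinely different route from the paper.

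The paper handles the inner ball $B_{1/4}$ by recycling the maximum-principle trapping from Theorem~\ref{thm:non-conc}: since $\spt M$ is caught between $S_{\lambda_{1/4}^-}$ and $S_{\lambda_{1/4}^+}$, the computations \eqref{eqn:l2-inside-lambda}--\eqref{eqn:l2-annulus} (with $\lambda'$ in place of $\lambda$) directly bound $\int_{B_{1/4}} d_{S_{\lambda'}}^2 d\mu_M$ by $c\max\{((\lambda_{1/4}^\pm)^{1-\gamma}-(\lambda')^{1-\gamma})^2\}$, and this is controlled by the desired right-hand side via \eqref{eqn:h-v-l2-bounds}. For the outer annulus $B_1\setminus B_{1/4}$ the paper splits into points far from $\bC$ (where both $d_{S_\lambda}$ and $d_{S_{\lambda'}}$ are comparable to $1$) and a tube around $\bC$ (where a direct graphical comparison works).

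Your approach instead produces a single pointwise inequality valid on all of $B_1$ and integrates it; the key observation is that the worst-case weight $\rho_0^{2\gamma}$ appearing in region (iii) cancels exactly against the trivial size bound $(\lambda^{1-\gamma}-(\lambda')^{1-\gamma})^2 \le c\max(|\lambda|,|\lambda'|)^{2-2\gamma}$ and the Chebyshev factor $\rho_0^{-2}$. This is more elementary: it never invokes the graphical structure of $M$, the maximum principle, or the smallness hypothesis $\int d_{S_\lambda}^2 d\mu_M \le \eps_4^2$, and so actually establishes the inequality for any varifold in $B_1$ satisfying $\mu_M(B_r)\le c(\bC) r^n$. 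The paper's proof is shorter once Theorem~\ref{thm:non-conc} is in hand, but yours is self-contained and slightly more general.
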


\begin{proof}
The computations of Theorem \ref{thm:non-conc} show that, provided $|\lambda|, |\lambda'| \leq \Lambda_{4}(\bC)$, we have
\begin{align*}
\int_{B_{1/4}} d_{S_{\lambda'}}^2 d\mu_M 
&\leq c(\bC) \max\{ ((\lambda^+_{1/4})^{1-\gamma} - (\lambda')^{1-\gamma})^2, ((\lambda_{1/4}^-)^{1-\gamma} - (\lambda')^{1-\gamma})^2 \} \\
&\leq c(\bC) \int_{B_1} d_{S_\lambda}^2 d\mu_M + c(\bC) ( \lambda^{1-\gamma} - (\lambda')^{1-\gamma})^2.
\end{align*}
It remains only to control the annuluar region $B_1 \setminus B_{1/4}$.

Choose $\eps(\Lambda_{4}, \bC)$ sufficiently small so that if $|\lambda| \leq \Lambda_{4}$, then the nearest point projection from $B_\eps(\bC) \cap B_1 \setminus B_{1/4}$ onto $S_\lambda$ is smooth and lies in $S_\lambda \cap B_2 \setminus B_{1/8}$.  Ensure $\Lambda_{5}(\eps, \Lambda_{4}, \bC) \leq \Lambda_{4}$ is sufficiently small, so that $|\lambda| \leq \Lambda_{5}$ implies $S_\lambda \cap B_2 \setminus B_{1/8} \subset B_{\eps/2}(\bC)$.

Given $x \in B_1 \setminus B_{1/4}$, first assume that $x \not\in B_\eps(\bC)$.  In this case
\[
\eps/2 \leq d(x, S_\lambda) \leq 2,
\]
and hence we have
\[
\int_{(B_1\setminus B_{1/4}) \setminus B_\eps(\bC)} d_{S_{\lambda'}}^2 d\mu_M \leq 4 \mu_M(B_1) \leq (16/\eps^2) c(\bC) \int_{(B_1 \setminus B_{1/4}) \setminus B_\eps(\bC)} d_{S_\lambda}^2 d\mu_M.
\]

Now assume $x \in B_\eps(\bC)$.  Let $x'$ be the nearest point projection to $\bC$, and let $u(x') = x - x'$.  Since $|v_\lambda| + |\nabla v_\lambda| \leq c(\bC) |\Lambda_{5}|^{-\gamma}$ on $B_2 \setminus B_{1/8}$, we have
\[
d(x, S_{\lambda}) = (1+\psi(\Lambda_{5} | \bC)) |x - v_\lambda(x')| .
\]
In particular, ensuring $\Lambda_{5}(\bC)$ is sufficiently small and using Lemma \ref{lem:v-diff}, we get
\begin{align*}
d(x, S_{\lambda'}) 
&\leq 2 |x - v_{\lambda'}(x')| \\
&\leq 2|x - v_{\lambda}(x)| + 2 |v_{\lambda'}(x) - v_{\lambda}(x)| \\
&\leq 4 d(x, S_{\lambda}) + c(\bC) | (\lambda')^{1-\gamma} - \lambda^{1-\gamma}|.
\end{align*}
Integrating $d\mu_M$ over $B_1 \cap B_\eps(\bC) \setminus B_{1/4}$ gives the required result.
\end{proof}


\end{comment}

\section{$L^2$-Excess decay}

In this section we work towards the following decay theorem.
\begin{prop}[Decay Lemma]\label{prop:decay}
Given any $\theta \leq 1/8$, there are positive constants $\delta_{6}(\bC, \theta)$, $\Lambda_{6}(\bC, \theta)$, $c_{6}(\bC)$, $\alpha(\bC)$, so that the following holds: If $|\lambda| \leq \Lambda_{6}$, and $M$ is a stationary integral $n$-varifold in $B_1$, satisfying
\begin{equation}\label{eqn:decay-hyp1}
\int_{B_1} d_{S_\lambda}^2 d\mu_M \leq E \leq \delta_{6}^2, \quad \mu_M(B_1) \leq (7/4) \mu_\bC(B_1), \quad \mu_M(\overline{B_{1/10}}) \geq (1/2) \mu_\bC(B_{1/10}),
\end{equation}
then we can find $a \in \R^{n+1}$, $q \in SO(n+1)$, $\lambda' \in \R$, with
\begin{equation}\label{eqn:decay-concl1}
|a| + |q - Id| + |(\lambda')^{1-\gamma} - \lambda^{1-\gamma}| \leq c_{6} E^{1/2},
\end{equation}
so that
\begin{equation}\label{eqn:decay-concl2}
\theta^{-n-2} \int_{B_\theta(a)} d_{a+q(S_{\lambda'})}^2 d\mu_M \leq c_{6} \theta^{2\alpha} E, \quad \mu_M(\overline{B_{\theta/10}(a)}) \geq (1/2) \mu_\bC(B_{1/10}).
\end{equation}
\end{prop}

\vspace{5mm}

We first define a general notion of blow-up sequence and show how any blow-up sequence gives rise to a Jacobi field, i.e. a solution of the linearized problem $\cL_\bC(w) = 0$.

\begin{definition}
Consider the sequences $a_i \in \R^{n+1}$, $\lambda_i \in \R$, $q_i \in SO(n+1)$, $M_i$ stationary integral varifolds in $B_1 \subset \R^{n+1}$, and $E_i \in \R$.  We say the collection $(M_i, E_i, a_i, \lambda_i, q_i)$ is a blow-up sequence if:
\begin{enumerate}
\item $a_i \to 0$, $\lambda_i \to 0$, $q_i \to Id$, $E_i \to 0$

\item $\mu_{M_i}(B_1) \leq (7/4) \mu_\bC(B_1)$, $\mu_{M_i}(\overline{B_{1/10}}) \geq (1/2) \mu_\bC(B_{1/10})$

\item $\limsup_i E_i^{-1} \int_{B_1} d_{a_i + q_i(S_{\lambda_i})}^2 d\mu_{M_i} < \infty$
\end{enumerate}
\end{definition}

\begin{prop}\label{prop:blow-up}
Let $(M_i, \beta_i, a_i, \lambda_i, q_i)$ be a blow-up sequence.  From Theorem \ref{thm:non-conc}, there is a sequence of radii $\tau_i \to 0$, so that
\[
M \cap B_{1/2} \setminus B_{\tau_i} = \graph_{a_i + q_i(S_{\lambda_i})}(u_i), \quad \sum_{k=0}^3 r^{k-1} |\nabla^k u_i| \to 0,
\]
and
\[
(a_i + q_i(S_{\lambda_i})) \setminus B_{\tau_i} = \graph_\bC(\phi_i), \quad \sum_{k=0}^3 r^{k-1} |\nabla^k \phi_i| \to 0.
\]
Write $\Phi_i(x) = x + \phi_i(x) \nu_{\bC}$ for the graphing function associated to $\phi_i$.

There is a subsequence, also denoted $i$, and a solution $w :  \bC \cap B_{1/4} \to \R$ to $\cL_\bC(w) = 0$, satisfying the following:
\begin{enumerate}
\item smooth convergence $E_i^{-1/2} u_i \circ \Phi_i \to w$ on compact subsets of $\bC \cap B_{1/4} \setminus \{0\}$ ;

\item $L^\infty$ decay: for all $r < 1/4$:
\[
w(r\omega)^2 \leq c(\bC)r^{2\gamma}  \left( \limsup_i E_i^{-1} \int_{B_1} d_{a_i + q_i(S_{\lambda_i})}^2 d\mu_{M_i} \right) ;
\]
\item strong $L^2$ convergence:
\[
E_i^{-1} \int_{B_r} d_{a_i + q_i(S_{\lambda_i})}^2 d\mu_{M_i} \to \int_{B_r} w^2 d\mu_\bC \quad \forall r \leq 1/4.
\]
\end{enumerate}
\end{prop}

\begin{remark}
For shorthand, we will often say $E_i^{-1/2} u_i$ converges smoothly to $w$ to indicate convergence as in Proposition \ref{prop:blow-up}, conclusion 1.
\end{remark}

\begin{proof}
Part 1 is a fairly standard argument (see e.g. \cite{Simon1}), and parts 2, 3 follow directly from Theorem \ref{thm:non-conc}.  We outline the argument of part 1.  Since $a_i + q_i(S_{\lambda_i})$ converges smoothly to $\bC$ on compact subsets of $B_1 \setminus \{0\}$, the coefficients of $\cM_{a_i + q_i(S_{\lambda_i})}$, $\cL_{a_i + q_i(S_{\lambda_i})}$ converge locally smoothly to those of $\cM_\bC$, $\cL_\bC$.  Using this and standard elliptic estimates, we have for any compact $K \subset \subset (a_i + q_i(S_{\lambda_i})) \cap B_{1/2} \setminus \{0\}$, and $l = 0, 1, 2. \ldots$, uniform estimates of the form
\begin{align}
\sup_K |\nabla^l u_i| 
&\leq c(K, l) \left( \int_{(a_i + q_i(S_{\lambda_i})) \cap B_{1/2}} u_i^2 d\haus^n \right)^{1/2} \\
&\leq c(K, l) E_i^{1/2} \left( \limsup_i E_i^{-1} \int_{B_r} d_{a_i + q_i(S_{\lambda_i})}^2 d\mu_{M_i}\right)
\end{align}
Of course by assumption $\Phi_i \to 0$ in $C^\infty_{loc}(\bC \cap B_1 \setminus \{0\})$.  After passing to a subsequence, we deduce $C^\infty_{loc}(\bC \cap B_{1/4} \setminus \{0\})$ convergence of the functions $E_i^{-1/2} u_i \circ \Phi_i$ to some $w \in C^\infty(\bC \cap B_{1/4})$.

Now we can write
\[
0 = \cM_{a_i + q_i(S_{\lambda_i})}(u_i) = \cL_i(u_i) + \cE_i(u_i),
\]
where $\cL_i \equiv \cL_{a_i + q_i(S_{\lambda_i})}$ converges smoothly away from $0$ to the operator $\cL_\bC$, and where
\[
\sup_K |\cE_{S_{\lambda_i}}(u_i)| = C(K) |u|_{C^2(K)}^2 = o(1)E_i^{1/2}.
\]
It follows easily that $w$ solves the Jacobi operator $\cL_\bC(w) = 0$. 
\end{proof}

\vspace{5mm}

\begin{proof}[Proof of Proposition \ref{prop:decay}]
Choose $\alpha(\bC)$ so that $\gamma_3^+ = 1 < 1+\alpha \leq \gamma_{4}^+$.  Fix $\theta \leq 1/8$.  We first prove the decay estimate.  Suppose, towards a contradiction, there are sequences of numbers $\delta_i \to 0$, $\lambda_i \to 0$, $E_i \to 0$, and stationary integral varifolds $M_i$ in $B_1$, which satisfy:
\[
\int_{B_1} d_{S_{\lambda_i}}^2 d\mu_{M_i} \leq E_i \leq \delta_i, \quad \mu_{M_i}(B_1) \leq (7/4) \mu_\bC(B_1), \quad \mu_{M_i}(\overline{B_{1/10}}) \geq (1/2) \mu_\bC(B_{1/10}),
\]
but for which
\[
\theta^{-n-2} \int_{B_{\theta}(a)} d_{a' + q'(S_{\lambda'})}^2 d\mu_{M_i} \geq c_{6} \theta^{2\alpha} E_i
\]
for any $a \in \R^{n+1}$, $q \in SO(n+1)$, $\lambda' \in \R$ satisfying
\[
|a| + |q - Id| + |\lambda' - \lambda_i|^{1-\gamma} \leq c_{6} E_i^{1/2}.
\]
Here $c_{6}(\bC)$ will be fixed shortly.

For any sequence $\tau_i$, $\beta_i$ tending to $0$ sufficiently slowly, by Theorem \ref{thm:non-conc} we can write
\[
\spt M_i \cap B_{1/2} \setminus B_{\tau_i} = \graph_{S_{\lambda_i}}(u_i), \quad \sum_{k=0}^3 r^{k-1} |\nabla^k u_i| \leq \beta_i.
\]
By definition, $(M_i, E_i, 0, \lambda_i, Id)$ is a blow-up sequence, and so by Proposition \ref{prop:blow-up} there is a solution $w : \bC \cap B_{1/4} \to \R$ to $L w = 0$ satisfying:
\begin{equation}\label{eqn:w-bounds}
\int_{B_{1/4}} w^2 d\mu_\bC \leq 1, \quad |w(r\omega)| \leq c(\bC) r^\gamma,
\end{equation}
so that, after passing to a subsequence (also denoted $i$),
\[
E_i^{-1/2} u_i \to w
\]
smoothly on compact subsets of $\bC \cap B_{1/4} \setminus \{0\}$, and
\[
E_i^{-1} \int_{B_\rho} d_{S_{\lambda_i}}^2 d\mu_{M_i} \to \int_{B_\rho} w^2 d\mu_\bC \quad \forall \rho \leq 1/4.
\]

Using the pointwise bound \eqref{eqn:w-bounds} combined with \cite[(2.10) Lemma]{SiSo} to kill the modes $\gamma_i^-$, $i\in\N$, and the strongly integrable nature of $\bC$, there are $e \in \R$, $b \in \R^{n+1}$, and $A$ a skew-symmetric $(n+1)\times  (n+1)$ matrix, so that we can expand $w$ in $L^2(\bC \cap B_{1/4})$ as
\[
w(r\omega = x) = e r^{\gamma} + \nu_\bC(r\omega) \cdot (b + A x) + \sum_{j : \gamma_j^+ \geq 1+ \alpha} r^{\gamma_j^+} z_j(\omega)
\]
where the sum is $L^2(\Sigma)$-orthogonal for each fixed $r$.  In particular, using the $L^2$ bound \eqref{eqn:w-bounds} and an appropriate choice of $r \in (1/8, 1/4)$, we get
\[
|e| + |b| + |A| \leq c_{7}(\bC),
\]
and by Fubini we have
\begin{equation}\label{eqn:w-sum-bound}
\sum_{j : \gamma_j^+ \geq 1+\alpha} \frac{(1/4)^{2\gamma_j^++n}}{2\gamma_j^+ +n} \int_\Sigma z_j^2 \leq 1.
\end{equation}

\vspace{5mm}

We first show that, by replacing $\lambda_i$ with appropriate $\lambda_i'$, we can arrange so that $e = 0$.  Let us define
\[
\lambda_i' = (e E_i^{1/2} + \lambda_i^{1-\gamma})^{1/(1-\gamma)}.
\]
Trivially $\lambda_i' \to 0$.  Corollary \ref{cor:change-lambda} implies that $(M_i, E_i, 0, \lambda_i', Id)$ is a blow-up sequence also.

Our choice implies that
\[
v_{\lambda_i'} - v_{\lambda_i} = (1+o(1)) ((\lambda_i')^{1-\gamma} - \lambda_i^{1-\gamma}) r^\gamma = (1+o(1)) e E_i^{1/2} r^\gamma,
\]
where we write $o(1)$ to signify any function which tends to $0$ as $i \to \infty$.  We can write
\[
S_{\lambda_i'} \cap B_1 \setminus B_{\tau_i} = \graph_{S_{\lambda_i}}(v_i),
\]
where, setting $x = r\omega + v_{\lambda_i}(r\omega) \nu_\bC(r\omega)$:
\[
|v_i(x) - (v_{\lambda_i'} - v_{\lambda_i})(x)| \leq o(1) |(v_{\lambda_i'} - v_{\lambda_i})(x)|.
\]
In other words, 
\[
v_i(x) = (1+o(1)) eE_i^{1/2} r^\gamma.
\]

Write
\[
\spt M_i \cap B_{1/2} \setminus B_{\tau_i} = \graph_{S_{\lambda_i'}}(\tilde u_i).
\]
If we set $y = x + v_i(x) \nu_{S_{\lambda_i}}(x)$ (for $x$ as above), then
\[
\tilde u_i(y) = (1+o(1))(u_i(x) - v_i(x)) = (1+o(1))u_i(x) - (1+o(1)) eE_i^{1/2} r^\gamma.
\]
Applying Proposition \ref{prop:blow-up} to the blow-up sequence $(M_i, E_i, 0, \lambda_i', Id)$, we deduce that, after passing to a further subsequence, $E_i^{-1/2} \tilde u_i$ converges smoothly on compact subsets to
\[
w - e r^{\gamma} \equiv \nu_\bC(r\omega) \cdot (b + A x) + \sum_{j : \gamma_j^+ \geq 1+ \alpha} r^{\gamma_j^+} z_j(\omega),
\]
and we have strong $L^2$ convergence
\[
E_i^{-1} \int_{B_\rho} d_{S_{\lambda_i'}}^2 d\mu_{M_i} \to \int_{B_\rho} (w - e r^\gamma)^2 d\mu_\bC \quad \forall \rho \leq 1/4.
\]

\vspace{5mm}

We now show how to pick $a_i$, $q_i$ to arrange so that $b = 0$, $A = 0$.  This more standard, and essentially follows the usual ``integrability through rotations'' argument.

Choose $a_i = b E_i^{1/2}$ and $A_i = A E_i^{1/2}$, and let $q_i = \exp(A_i)$.  It's easy to check that $|a_i| + |q_i - Id| \leq c(\bC) E_i^{1/2}$.  Since
\[
d_H(S_{\lambda_i'} \cap B_2, (a_i + q_i(S_{\lambda_i'}) \cap B_2) \leq c(\bC) E_i^{1/2}
\]
it follows that $(M_i, E_i, a_i, \lambda_i', q_i)$ is a blow-up sequence also.

We can write
\[
(a_i + q_i(S_{\lambda_i'})) \cap B_1 \setminus B_{\tau_i} = \graph_{S_{\lambda_i'}}(v_i)
\]
(for $\tau_i \to 0$ sufficiently slowly) where
\[
v_i(x) = (1+o(1)) \nu_{S_{\lambda_i'}}(x) \cdot (a_i + A_i(x)).
\]

So now if $\tilde u_i$ is the graphing function of $M_i$ over $a_i + q_i(S_{\lambda_i'})$, and $u_i$ is the graphing function of $M_i$ over $S_{\lambda_i'}$, then we have
\[
|\tilde u_i( x + v_i(x)\nu_{S_{\lambda_i}}(x)) - (u_i(x) - v_i(x))| \leq o(1) |u_i(x) - v_i(x)|.
\]
This implies that
\begin{align*}
\tilde u_i( y ) &= (1+o(1))( u_i(x) - v_i(x)) \\
&= (1+o(1)) u_i(x) - (1+o(1)) \nu_{S_{\lambda_i'}}(x) \cdot (b + A(x)) E_i^{1/2}.
\end{align*}
Applying Proposition \ref{prop:blow-up} to this new blow-up sequence, we deduce that (after passing to a further subsequence) $E_i^{1/2} \tilde u_i$ converges smoothly on compact subsets to
\[
w - er^\gamma - \nu_\bC(r\omega) \cdot (b + A x) \equiv \sum_{j : \gamma_j^+ \geq 1+ \alpha} r^{\gamma_j^+} z_j(\omega),
\]
and we have strong $L^2$ convergence
\[
E_i^{-1} \int_{B_\rho} d_{a_i + q_i (S_{\lambda_i'})}^2 d\mu_{M_i} \to \int_{B_\rho} (w - e r^\gamma - \nu_\bC \cdot (b + Ax))^2 d\mu_\bC \quad \forall \rho \leq 1/4.
\]

\vspace{5mm}

We've demonstrated that by judiciously choosing our $a_i, q_i, \lambda_i'$, we can arrange so that
\[
w = \sum_{j : \gamma_j^+ \geq 1+ \alpha} r^{\gamma_j^+} z_j(\omega),
\]
where $z_j$ continue to satisfy the bound \eqref{eqn:w-sum-bound}.  Using \eqref{eqn:w-sum-bound}, and the fact that $4\theta \leq 1$, we compute:
\begin{align*}
\int_{B_{2\theta}} w^2 d\mu_\bC
&= \sum_{j : \gamma_j^+ \geq 1+\alpha} \frac{(2\theta)^{2\gamma_j^++n}}{2\gamma_j^++n} \int_\Sigma z_j^2(\omega) d\omega \\
&\leq \max_{j : \gamma_j^+ \geq 1+\alpha} (8\theta)^{2\gamma_j^++n} \\
&\leq (8\theta)^{n+2+2\alpha}.
\end{align*}
So by the strong $L^2$ convergence, for sufficiently large $i$ we must have
\[
\int_{B_{2\theta}} d_{a_i + q_i(S_{\lambda_i'})}^2 d\mu_{M_i} \leq 16^{n+2+2\alpha} \theta^{n+2+2\alpha} E_i,
\]
To recenter, we simply observe that for $i$ sufficiently large, we have
\[
B_{\theta}(a_i) \subset B_\theta,
\]
and hence we get
\[
\theta^{-n-2} \int_{B_{\theta}(a_i)} d_{a_i + q_i(S_{\lambda_i'})}^2 d\mu_{M_i} \leq 32^{n+4} \theta^{2\alpha} E_i,
\]
which is a contradiction for sufficiently large $i$.

\vspace{5mm}

Finally let us establish the lower volume bound \eqref{eqn:decay-concl2}.  This is also a straightforward proof by contradiction.  Suppose otherwise: there is a sequence $\delta_i \to 0$, $\lambda_i \to 0$, and $M_i$ satisfying the hypotheses \eqref{eqn:decay-hyp1} , and the decay of \eqref{eqn:decay-concl1}, \eqref{eqn:decay-concl2}, but for which
\[
\mu_M(\overline{B_{\theta/10}(a_i)}) < (1/2) \mu_\bC(B_{\theta/10})
\]
for all $i$.

By compactness of stationary varifolds, we can pass to a subsequence (also denoted $i$) so that $M_i \to M$ in $B_1$, for some integral stationary $n$-varifold $M$ in $B_1$.  Since
\[
\int_{B_1} d_{\bC}^2 d\mu_{M_i} \to 0,
\]
by the constancy theorem $M = k [\bC]$, for some integer $k$.  Since $\mu_{M}(\overline{B_{1/10}}) \geq (1/2) \mu_\bC(B_{1/10})$ for all $i$, we must have $k \geq 1$.  Since $\mu_{M}(B_1) \leq (7/4) \mu_\bC(B_1)$, we must have $k \leq 1$.  So in fact $k = 1$, and we deduce that $M_i$ varifold converge to $[\bC]$.

Since $|a_i| \to 0$, for any $0 < \eps < 1$ and sufficiently large $i$ we have
\[
\mu_{M_i}(\overline{B_{\theta/10}(a_i)}) \geq \mu_{M_i}(B_{(1-\eps)\theta/10}) \to \mu_\bC(B_{(1-\eps)\theta/10}) = (1-\eps)^n \mu_\bC(B_{\theta/10}).
\]
Choosing $\eps(n)$ sufficiently small, so that $(1-\eps)^n \geq 3/4$, and we obtain a contradiction for large $i$.  This completes the proof of Proposition \ref{prop:decay}
\end{proof}

\section{Regularity}

The key idea to obtain regularity is to iterate Proposition \ref{prop:decay} at decreasing scales, until $\lambda$ scale-invariantly becomes too big.  This is the radius at which we start to ``see'' the foliation as separate from the cone, and this is the radius at which we stop.  If no such radius exists, we keep iterating until radius $0$, to deduce regularity over the cone.  Note that, from only the information we start with, we have no way of predetermining how large this radius is.

\begin{prop}\label{prop:iterate}
There are $\beta(\bC)$, $\delta_{7}(\bC)$, $c_{7}(\bC)$ so that the following holds.  Take $|\lambda| \leq \Lambda_{6}$, and let $M$ be a stationary integral varifold in $B_1$ satisfying
\begin{equation}\label{eqn:iterate-hyp}
\int_{B_1} d_{S_\lambda}^2 d\mu_M \leq E \leq \delta_{7}^2, \quad \mu_M(B_1) \leq (3/2) \mu_\bC(B_1), \quad \mu_M(\overline{B_{1/10}}) \geq (1/2) \mu_\bC(B_{1/10}).
\end{equation}
Then there is a $a \in R^{n+1}$, $q \in SO(n+1)$, $\lambda' \in \R$, with
\begin{equation}\label{eqn:iterate-concl1}
|a| + |q - Id| + |(\lambda')^{1-\gamma} - \lambda^{1-\gamma}| \leq c_{7} E^{1/2},
\end{equation}
so that, for all $1 \geq r > c_{7} |\lambda'|$, we have the decay:
\begin{equation}\label{eqn:iterate-concl2}
r^{-n-2} \int_{B_r(a)} d_{a + q(S_{\lambda'})}^2 d\mu_M \leq c_{7} r^{2\beta} E,
\end{equation}
and the volume bounds
\begin{equation}\label{eqn:iterate-concl3}
\mu_M(B_r(a)) \leq (7/4) \mu_\bC(B_r), \quad \mu_M(\overline{B_{r/10}(a)}) \geq (1/c_{7}) \mu_\bC(B_{r/10}).
\end{equation}
\end{prop}

\begin{remark}
In fact one can take $\beta$ to be anything in the interval $(0, \alpha)$, except of course in this case the various constants $\delta_{7}$, $c_{7}$ will depend on the choice of $\beta$ also.
\end{remark}

\begin{proof}
Choose $\theta(\bC) \leq 1/4$ sufficiently small so that $c_{6}\theta^{2\alpha} \leq 1/4$.  Set $r_i = \theta^i$.  We claim that we can find an integer $I \leq \infty$, and sequences $a_i \in \R^{n+1}$, $q_i \in SO(n+1)$, $\lambda_i \in \R$, ($i = 0, 1, \ldots, I$), so that for all $i < I$ we have:
\begin{align}
&a_0 = 0, \quad q_0 = Id, \quad \lambda_0 = \lambda, \label{eqn:a_0} \\
&r_i^{-1} |a_{i+1} - a_i| + |q_{i+1} - q_i| + r_i^{-1} |(\lambda_{i+1})^{1-\gamma} - (\lambda_i)^{1-\gamma}| \leq c_{6} 2^{-i} E^{1/2}, \label{eqn:a_i-bounds}\\
&|\lambda_i| \leq \Lambda_{6} r_i, \label{eqn:l_i-bounds}
\end{align}
and the decay:
\begin{equation}\label{eqn:r_i-decay}
r_i^{-n-2} \int_{B_{r_i}(a_i)} d_{a_i + q_i(S_{\lambda_i})}^2 d\mu_M \leq 4^{-i} E,
\end{equation}
and the volume bounds:
\begin{equation}\label{eqn:r_i-volume}
\mu_M(B_{r_i}(a_i)) \leq (7/4) \mu_\bC(B_{r_i}), \quad \mu_M(\overline{B_{r_{i}/10}(a_i)}) \geq (1/2) \mu_\bC(B_{r_i}).
\end{equation}
Moreover, if $I < \infty$, then 
\[
|\lambda_I| > \Lambda_{6} r_I.
\]

Let us prove this by induction.  Let us first show how the upper volume bound of \eqref{eqn:r_i-volume} follows from \eqref{eqn:a_i-bounds}.  If we have $a_0, \ldots, a_i$, satisfying \eqref{eqn:a_i-bounds}, then 
\[
|a_i| \leq \sum_{j=0}^{i-1} |a_{j+1} - a_j| \leq c_{6} E^{1/2} \sum_{j=0}^{i-1} r_i \leq 2 c_{6} \delta_{7}.
\]
Therefore, provided $\delta_{7}(\bC)$ is sufficiently small, we have by volume monotonicity
\begin{align*}
\mu_M(B_{r_i}(a_i)) 
&\leq (1-2c_{6} \delta_{7})^{-n} \mu_M(B_1) r_i^n \\
&\leq (1-2c_{6} \delta_{7})^{-n}(3/2) \mu_\bC(B_1) r_i^n \\
&\leq (7/4) \mu_\bC(B_{r_i}).
\end{align*}
Let us also ensure $\delta_{7}(\bC)$ is sufficiently small so that $2c_{6} \delta_{7} < \theta$.

It remains to show the existence of the $a_i, q_i, \lambda_i$.  Since $|\lambda_0| = |\lambda| \leq \Lambda_{6}$, we can apply Proposition \ref{prop:decay} to $M$ to obtain  $a_1, q_1, \lambda_1$, which satisfy the required estimates.  If $|\lambda_1| > \Lambda_{6} r_1$, we set $I = 1$ and stop.  This proves the base case of our induction.

Suppose, by inductive hypothesis, we have found $a_i, q_i, \lambda_i$ satisfying \eqref{eqn:a_i-bounds}, \eqref{eqn:l_i-bounds}, \eqref{eqn:r_i-decay}, \eqref{eqn:r_i-volume}, and for which $|\lambda_i| \leq \Lambda_6 r_i$.  By inductive hypotheses, we can apply Proposition \ref{prop:decay} to the varifold $M_i = (q_i^{-1})_\sharp (\eta_{a_i, r_i})_\sharp M$ and foliate $S_{r_i^{-1} \lambda_i}$ to obtain $\tilde a_{i+1}, \tilde q_{i+1}, \tilde \lambda_{i+1}$ satisfying \eqref{eqn:decay-concl1}, \eqref{eqn:decay-concl2}.  If we let
\[
a_{i+1} =  q_i(r_i \tilde a_{i+1}) + a_i , \quad q_{i+1} = q_i \circ \tilde q_{i+1}, \quad \lambda_{i+1} = r_i \tilde\lambda_{i+1}, 
\]
then it follows by scaling that this $a_{i+1}, q_{i+1}, \lambda_{i+1}$ satisfies the requirement estimates.  If $|\lambda_{i+1}| > \Lambda_{6} r_{i+1}$, we stop and set $I = i+1$.  Otherwise, continue.  By mathematical induction this proves the existence of the sequence.

\vspace{5mm}

If $I < \infty$, then let $a = a_I$, $q = q_I$, and $\lambda' = \lambda_I$.  Otherwise, observe that \eqref{eqn:a_i-bounds} imply that $a_i$, $q_i$ form a Cauchy sequence, and hence we take $a = \lim_i a_i$, $q = \lim_i q_i$, $\lambda' = 0$.

From \eqref{eqn:a_i-bounds}, we have for every $i < I$:
\[
r_i^{-1} |a - a_i| + |q - q_i| + r_i^{-1} | (\lambda')^{1-\gamma} - (\lambda_i)^{1-\gamma}| \leq 2 c_{6} 2^{-i} E^{1/2}.
\]
In particular, taking $i = 0$ gives \eqref{eqn:iterate-concl1}.

Given any $r_I \leq r < 1$, choose integer $i \leq I$ so that $r_{i+1} \leq r < r_i$.  Then, ensuring that $c_{6} \delta_{7} << \theta$, and using Corollary \ref{cor:change-lambda}, we have
\begin{align}
&r^{-n-2} \int_{B_r(a)} d_{a + q(S_{\lambda'})}^2 d\mu_M \nonumber\\
&\leq c r_i^{-2} |a_{i+1} - a_i|^2 + c |q_{i+1} - q_i|^2 + c r_i^{-2} ( (\lambda')^{1-\gamma} - (\lambda_i)^{1-\gamma})^2 \nonumber\\
&\quad + c r_i^{-n-2} \int_{B_{r_i}(a_i)} d_{a_i + q_i(S_{\lambda_i})}^2 d\mu_M \nonumber\\
&\leq c(\bC, \theta) 4^{-i} E \nonumber\\
&\leq c(\bC, \theta) r^{2\beta} E. \label{eqn:decay-for-r}
\end{align}
where $\beta = \log(1/2) / \log(\theta) > 0$.  Finally, observe that \eqref{eqn:a_i-bounds}, \eqref{eqn:l_i-bounds} imply
\[
\Lambda_{6} r_I < |\lambda'| \leq \Lambda_{6} r_{I-1} + (c_{6} \delta_{7})^{1/(1-\gamma)} r_{I-1} \leq c(\bC) r_I.
\]
Therefore, up to changing enlarging our constant $c$, we can take $r \geq \lambda'$ in \eqref{eqn:decay-for-r}.  The volume bounds follow directly from \eqref{eqn:r_i-volume}, \eqref{eqn:a_i-bounds}, and monotonicity.  This finishes the proof of Proposition \ref{prop:iterate}. 
\end{proof}

The proof of Theorem \ref{thm:main} is now essentially a straightforward application of Allard's theorem.
\begin{proof}[Proof of Theorem \ref{thm:main}]
Ensure $\delta_{1} \leq \delta_{7}$, and take $\Lambda_{1} = \Lambda_{6}$.  Apply Proposition \ref{prop:iterate} to obtain $a, q, \lambda'$.  For every $c_{7} |\lambda'| \leq r \leq 1$, a straightforward contradiction argument as in the proof of Theorem \ref{thm:non-conc}, implies that
\[
\spt M \cap B_{r/2}(a) \setminus B_{r/100}(a) = \graph_{a + q(S_{\lambda'})}(u), \quad r^{-1} |u| + |\nabla u| + [\nabla u]_{\beta, r} \leq c(\bC) r^{\beta} E^{1/2},
\]
for some $u : (a + q(S_{\lambda'})) \cap B_{r/2}(a) \setminus B_{r/200}(a) \to \R$.  If $\lambda' = 0$ we are done.

Suppose $\lambda' \neq 0$.  After scaling up by $|\lambda'|$, it suffices to show that there is a $u : (a+ q_{\sign(\lambda')}) \cap B_{c_{7}/2}(a) \to \R$ so that
\[
\spt M \cap B_{c_{7}/2}(a) = \graph_{a + q(S_{\sign(\lambda')})}(u), \quad |u| + |\nabla u| + [\nabla u]_\beta \leq c(\bC) E^{1/2},
\]
provided
\[
c_{7}^{-n-2} \int_{B_{c_{7}}(a)} d_{a + q(S_{\sign(\lambda')})}^2 d\mu_M \leq c_{7} E
\]
and
\[
\mu_M(B_{c_{7}}(a)) \leq (7/4) \mu_\bC(B_{c_{7}}), \quad \mu_M(\overline{B_{c_{7}/10}(a)}) \geq (1/c_{7}) \mu_\bC(B_{c_{7}/10}).
\]
However this is also an easy contradiction argument, taking $E \leq \delta_{1}$ to zero.
\end{proof}


\section{Mean curvature}

When $M$ has mean curvature, then we cannot use the maximum principle to conclude as in Proposition \ref{prop:decay}.  However, provided the mean curvature is sufficiently small and $L^2$ excess sufficiently large, then we can still get decay to one scale (Proposition \ref{prop:mc-decay}), by a straightforward contradiction argument, which suffices to characterize the singular nature, if not the precise local structure.

Iterating this gives scale-invariant smallness of the excess (Proposition \ref{prop:mc-iterate}), rather than decay.  At each scale we can deduce closeness to some rotate/translate of the cone $\bC$ (and hence foliate $S_\lambda$), but we cannot deduce that the rotations form a Cauchy sequence as we progress in scale.  If at some scale $\lambda$ becomes big, we stop, and we can deduce that $M \cap B_{1/2}$ is a $C^{1,\alpha}$ deformation of $S_1$ (\emph{without} effective estimates).  If we continue all the way to scale $0$, then we deduce that $M \cap B_{1/2}$ has an isolated singularity modeled on $\bC$, and a posteriori by \cite{AllAlm} we can deduce $M \cap B_{1/2}$ is a $C^{1,\alpha}$ graph over $\bC$.

\begin{prop}\label{prop:mc-decay}
For any $\theta \in (0, 1/4)$, $E_0 > 0$, there is a $\eps_{8}(\bC, \theta, E_0)$ so that the following holds.  Assume that $M$ is an integral $n$-varifold in $B_1$, satisfying $||\delta M||(B_1) \leq \eps_{8}$, and for some $E_0 \leq E$, $|\lambda| \leq \Lambda_{6}$ we have
\begin{equation}\label{eqn:mc-decay-hyp}
\int_{B_1} d_{S_\lambda}^2 d\mu_M \leq E \leq \delta_{6}^2, \quad \mu_M(B_1) \leq (7/4)\mu_\bC(B_1), \quad \mu_M(\overline{B_{1/10}}) \geq (1/2) \mu_\bC(B_{1/10}).
\end{equation}

Then there is an $a \in \R^{n+1}$, $q \in SO(n+1)$, $\lambda' \in \R$, satisfying
\begin{equation}\label{eqn:mc-decay-concl1}
|a| + |q - Id| + |(\lambda')^{1-\gamma} - \lambda^{1-\gamma}| \leq c_{6} E^{1/2},
\end{equation}
so that
\begin{equation}\label{eqn:mc-decay-concl2}
\theta^{-n-2} \int_{B_\theta(a)} d_{a + q(S_{\lambda'})}^2 d\mu_M \leq c_{6} \theta^{2\alpha} E, \quad \mu_M(\overline{B_{\theta/10}(a)}) \geq (1/2) \mu_\bC(B_{\theta/10}).
\end{equation}
Here $\delta_{6}(\bC, \theta)$, $c_{6}(\bC)$ are the constants from Theorem \ref{prop:decay}.
\end{prop}

\begin{proof}
Suppose, towards a contradiction, there is a sequence of integral $n$-varifolds $M_i$, and numbers $\eps_i \to 0$, $E_i \in [E_0, \delta_{6}^2]$, so that $M_i$ satisfy \eqref{eqn:mc-decay-hyp} and $||\delta M_i||(B_1) \leq \eps_i$, but for which \eqref{eqn:mc-decay-concl2} fails for all $a, q, \lambda'$ satisfying \eqref{eqn:mc-decay-concl1}.

We can pass to a subsequence, also denoted $i$, and obtain varifold converge $M_i \to M$, and convergence $E_i \to E \in [E_0, \delta_{6}^2]$.  The resulting $n$-varifold $M$ is stationary in $B_1$, continues to satisfy \eqref{eqn:mc-decay-hyp}, but fails \eqref{eqn:mc-decay-concl2} for all $a, q, \lambda'$ satisfying \eqref{eqn:mc-decay-concl1}.  However, $M$ satisfies the hypotheses of Proposition \ref{prop:decay}, contradicting the conclusions of Proposition \ref{prop:decay}.  This proves Proposition \ref{prop:mc-decay}.
\end{proof}

\begin{prop}\label{prop:mc-iterate}
There are constants $\delta_{9}(\bC)$, $c_{9}(\bC)$ which yield the following.  Given $E \in (0, \delta_{9}^2]$ and $p > n$, we can find an $\eps_{9}(\bC, E, p)$ so that: If $|\lambda| \leq \Lambda_{6}$, and $M$ is an integral $n$-varifold in $B_1$ with generalized mean-curvature $H_M$, with zero generalized boundary, and which satisfies
\[
\int_{B_1} d_{S_\lambda}^2 d\mu_M \leq E, \quad \mu_M(B_1) \leq (3/2) \mu_\bC(B_1), \quad \mu_M(\overline{B_{1/10}}) \geq (1/2) \mu_\bC(B_{1/10}),
\]
and
\[
\left( \int_{B_1} |H_M|^p d\mu_M \right)^{1/p} \leq \eps_{9}.
\]
Then there is an $a \in \R^{n+1}$, $\lambda' \in \R$, and for each $1 \geq r \geq c_{9} |\lambda'|$ there is a $q_r \in SO(n+1)$, which satisfy
\[
|a| + |\log(r)|^{-1} |q_r - Id| + |(\lambda')^{1-\gamma} - \lambda^{1-\gamma}| \leq c_{9} E^{1/2},
\]
so that for all $1 \geq r \geq c_{9} |\lambda'|$ we have the smallness:
\[
r^{-n-2} \int_{B_r(a)} d_{a+q_r(S_{\lambda'})}^2 d\mu_M \leq c_{9} E,
\]
and volume bounds
\[
\mu_M(B_r(a)) \leq (7/4) \mu_\bC(B_r), \quad \mu_M(\overline{B_{r/10}(a)}) \geq (1/c_{9})\mu_\bC(B_{r/10}).
\]
\end{prop}

\begin{proof}
The proof is almost verbatim to Proposition \ref{prop:iterate}, except we use Proposition \ref{prop:mc-decay} in place of Proposition \ref{prop:decay}.  Notice that Proposition \ref{prop:mc-decay} requires a lower bound on $E$, and so we cannot deduce decay of the $L^2$ excess, only smallness.   Choose $\theta(\bC)$ as in Proposition \ref{prop:mc-decay}, and set $r_i = \theta^i$.  We claim that we can find an integer $I \leq \infty$, and sequences $a_i \in \R^{n+1}$, $q_i \in SO(n+1)$, $\lambda_i \in \R$ ($i = 0, \ldots, I$) so that for all $i < I$ we have:
\begin{align}
&a_0 = 0, \quad q_0 = Id, \quad \lambda_0 = 0, \label{eqn:mc-a_0} \\
&r_i^{-1} |a_{i+1} - a_i| + |q_{i+1} - q_i| + r_i^{-1} |(\lambda_{i+1})^{1-\gamma} - (\lambda_i)^{1-\gamma}| \leq c_{6} E^{1/2}, \label{eqn:mc-a_i-bounds}\\
&|\lambda_i| \leq \Lambda_{6} r_i, \label{eqn:mc-l_i-bounds}
\end{align}
and the smallness:
\begin{equation}\label{eqn:mc-r_i-decay}
r_i^{-n-2} \int_{B_{r_i}(a_i)} d_{a_i + q_i(S_{\lambda_i})}^2 d\mu_M \leq E, \quad r_i^{1-n/p} \left( \int_{B_{r_i}(a_i)} |H_M|^p d\mu_M \right)^{1/p} \leq \eps_{9}, 
\end{equation}
and the volume bounds
\begin{equation}\label{eqn:mc-r_i-volume}
\mu_M(B_{r_i}(a_i)) \leq (7/4) \mu_\bC(B_{r_i}), \quad \mu_M(\overline{B_{r_i/10}(a_i)}) \geq (1/2) \mu_\bC(B_{r_i}).
\end{equation}

\vspace{5mm}

We proceed by induction.  Ensure $\delta_{9} \leq \delta_{7}(\bC)$.  As in the proof of Proposition \ref{prop:iterate}, given $a_0, \ldots, a_i$ satisfying \eqref{eqn:mc-a_i-bounds}, we have $|a_i| \leq 2c_{6} \delta_{9}$, and hence by volume monotonicity \eqref{eqn:monotonicity}
\begin{align*}
\mu_M(B_{r_i}(a_i)) &\leq ( 1  + c(p, \bC) \eps_{9} ) (1-2c_{6} \delta_{9})^{-n} \mu_M(B_1) r_i^{n} \leq (7/4) \mu_\bC(B_{r_i})
\end{align*}
provided $\delta_{9}(\bC)$ and $\eps_{9}(\bC, p)$ are sufficiently small.  This proves the upper volume bounds \eqref{eqn:mc-r_i-volume}.  The mean curvature bound in \eqref{eqn:mc-r_i-decay} follows trivially from $p > n$.

To obtain the $a_i, q_i, \lambda_i$, we first observe that, given any $a_i, q_i$ above, if we set $M_i = (q_i^{-1})_\sharp (\eta_{a_i, r_i})_\sharp M$, then
\begin{align*}
||\delta M_i||(B_1) 
&= r_i^{-n+1} \int_{B_{r_i}(a_i)} |H_M| d\mu_M \\
&\leq r_i^{-n+1} \left( \int_{B_{r_i}(a_i)} |H_M|^p d\mu_M \right)^{1/p} \mu_M(B_{r_i}(a_i))^{1-1/p} \\
&\leq c(\bC) \eps_{9}.
\end{align*}
In particular, ensuring $\eps_{9}(\bC, p)$ is sufficiently small, we can ensure that $||\delta M_i||(B_1) \leq \eps_{8}(\bC, p)$.  We can therefore proceed like the proof of Proposition \ref{prop:iterate}, using Proposition \ref{prop:mc-decay} in place of Proposition \ref{prop:decay}.
\end{proof}

\begin{proof}[Proof of Corollary \ref{cor:mc}]
Ensuring $\delta_{3} < \delta_{9}$, $\eps_{3} < \eps_{9}(\delta, p)$, then we can obtain $a, \lambda'$ and $q_r$ as in Proposition \ref{prop:mc-decay}.  A straightforward argument by contradiction, like in the proof of Theorem \ref{thm:main}, shows that for every $1 \geq r \geq c_{9} |\lambda'|$,
\[
\spt M \cap B_{r/2}(a) \setminus B_{r/100}(a) = \graph_{a + q_r(S_{\lambda'})}(u), \quad r^{-1} |u| + |\nabla u| + [\nabla u]_{\beta, r} \leq c(\bC) \delta ,
\]
(provided $\delta_{3}, \eps_{3}$ are sufficiently small, depending on $\bC$, $p$).  If $\lambda' = 0$, this shows that any tangent cone at $a$ is rotation of $\bC$.  Therefore, shrinking $\delta_{3}, \eps_{3}$ as necessary, \cite{AllAlm} implies $\spt M \cap B_{1/2}$ is a $C^{1, \beta}$ perturbation of $\bC$.

If $\lambda' \neq 0$, then arguing as in the proof of Theorem \ref{thm:main}, we get that
\[
\spt M \cap B_{c_{9} |\lambda'|/2}(a) = \graph_{a + q_{c_{9}|\lambda'|}(S_{\lambda'})}(u), \quad r^{-1} |u| + |\nabla u| + [\nabla u]_\beta \leq c(\bC) \delta.
\]
In particular, $\spt M \cap B_{1/2}$ is regular.
\end{proof}

\end{comment}

\section{Corollaries, related results}

In this section we give an alternate proof of \cite[Theorem 0.3]{SiSo} (i.e., Corollary \ref{cor:ss} of this paper) using our main regularity Theorem \ref{thm:main}.  We also prove, using results of \cite{CaHaSi}, a uniqueness result for minimal graphs over $\bC$ (Proposition \ref{prop:uniqueness}).

\begin{proof}[Proof of Corollary \ref{cor:ss}]
Let
\[
E_i = R_i^{-n-2} \int_{B_{R_i}} d_{\bC}^2 d\mu_M
\]
By hypothesis, $E_i \to 0$.  For all $i$ sufficiently large, we can apply Theorem \ref{thm:main} to deduce the existence of $a_i \in \R^{n+1}, q_i \in SO(n+1), \lambda_i \in \R$, and $u_i : (a_i+q_i(S_{\lambda_i})) \cap B_{R_i}(a_i) \to \R$, so that for all $c_{7}|\lambda_i| \leq r \leq R_i$, 
\begin{equation}\label{eqn:ss-graph}
\spt M \cap B_{r}(a_i) = \graph_{a_i + q_i(S_{\lambda_i})}(u_i), \quad r^{-1}|u_i| + |\nabla u_i| \leq c(\bC) E_i^{1/2}.
\end{equation}

Suppose, towards a contradiction, that $a_i \to \infty$.  Let $U_i(x) = x + u_i(x) \nu_{a_i + q_i(S_{\lambda_i})}(x)$ be the graphing function associated to $u_i$.  From \eqref{eqn:ss-graph} we have that $|U_i(x) - x| \leq o(1)|x - a_i|$.

Fix any $\rho > 0$, then by the previous paragraph we have
\[
U_i^{-1}(\spt M \cap B_\rho) \subset (a_i + q_i(S_{\lambda_i})) \cap B_{2|a_i|}(a_i) \setminus B_{|a_i|/2}(a_i)
\]
for $i$ sufficiently large.  Now the curvature of $(a_i + q_i(S_{\lambda_i})) \cap B_{2|a_i|}(a_i) \setminus B_{|a_i|/2}(a_i)$ tends to zero uniformly as $i \to \infty$, and $|\nabla u_i| = o(1)$, and so we must have that $\spt M \cap B_\rho$ is contained in a plane.  Taking $\rho \to \infty$, we deduce $\spt M$ is planar, and hence $\bC$ is planar also.  This is a contradiction, and so $a_i$ must be bounded.

By \eqref{eqn:ss-graph}, we have
\[
d(0, \spt M) = d(0, a_i + q_i(S_{\lambda_i})) + o(1) |a_i| \geq \frac{1}{c(\bC)} |\lambda_i| - c(\bC) |a_i|
\]
for $i$ large.  Since $a_i$ is bounded, we must have that $\lambda_i$ is bounded also.  We can pass to a subsequence, also denoted $i$, so that $a_i \to 0$, $\lambda_i \to \lambda$, and $q_i \to q \in SO(n+1)$.  We get smooth convergence on compact subsets of $\R^{n+1} \setminus \{a\}$
\[
a_i + q_i(S_{\lambda_i}) \to a + q(S_\lambda),
\]
and $C^1_{loc}$ convergence $u_i \to 0$.  We deduce that $\spt M = a + q(S_\lambda)$, which is the desired conclusion.
\end{proof}

\subsection{Uniqueness of graphs over $\bC$}\label{sec:uniqueness}

\cite{CaHaSi} prove the following theorem constructing a plethora of examples of minimal surfaces in $B_1$, which are perturbations of a given minimal cone.  To state their theorem properly we need a little notation.  Given $J \in \N$, define the projection mapping $\Pi_J : L^2(\Sigma) \to L^2(\Sigma)$
\[
\Pi_J(g)(r\omega) = \sum_{j \geq J+1} <g, \phi_j>_{L^2(\Sigma)} r^{\gamma_j^+} \phi_j(\omega) .
\]
For shorthand write $\bC_1 = \bC \cap \overline{B_1}$.

\begin{theorem}[\cite{CaHaSi}]
Take $m > 1$, $\alpha \in (0,1)$, and $J \in \N$ so that $\gamma_J^+ \leq m < \gamma_{J+1}^+$.  There are $\eps(\bC, m, \alpha)$, $\Lambda(\bC, m, \alpha)$ so that given any $g \in C^{2,\alpha}$ satisfying $|g|_{2,\alpha} \leq \eps$, and any $\lambda \in (0, \Lambda)$, then there is a solution $u_\lambda \in C^{2,\alpha}(\bC_1)$ to the problem
\[
M_\bC(u) = 0 \text{ on } \bC_1, \quad \Pi_J(u_\lambda) = \lambda \Pi_J(g) \text{ on } \Sigma,
\]
satisfying
\[
r^{-m} |u|_{2,\alpha,r} \leq c(\bC, m, \alpha) \lambda |g|_{2,\alpha} \quad \forall 0 < r \leq 1.
\]
\end{theorem}

Loosely speaking, \cite{CaHaSi} are solving a boundary-value-type problem, where one is allowed to specify the decay rate at $r = 0$, and the Fourier modes at $r = 1$ which decay \emph{faster} than the prescribed rate.  Though they do not comment on it, implicit in their work is the uniqueness statement of Proposition \ref{prop:uniqueness}.  The basic idea is that solutions to the minimal surface operator can be written as fixed points to a contraction mapping, provided the solutions decay like $r^{1+\eps}$, and have boundary data sufficiently small.  We illustrate this below.

\begin{proof}[Proof of Proposition \ref{prop:uniqueness}]
We use the notation of Theorem \ref{thm:main}.  Fix an $\alpha \in (0,1)$, and given $w \in C^{2,\alpha}(\bC_1)$, define the norm
\begin{equation}\label{eqn:u-B-bound}
|w|_B = \sup_{0 < r \leq 1} r^{-\beta} |u|_{2,\alpha,r}.
\end{equation}
Pick $J$ so that $\gamma_J^+ = 1$, and recall that $1+\beta \in (\gamma_J^+, \gamma_{J+1}^+)$.

By assumption, we have $\spt M \cap B_{1/2} = \graph_{U \subset a + q(\bC)}(u)$, where $u$ satisfies the estimates \eqref{eqn:main-concl2}.  Since we are concerned with $M \cap B_{1/4}$, ensuring $\delta_{2}(\bC)$ is small, there is no loss in assuming (after scaling, translating, rotating) that $\spt M = \graph_{\bC_1}(u)$.  By standard interior elliptic estimates and decay \eqref{eqn:main-concl2}, we can assume that $u$ is smooth, and satisfies
\[
|u|_B \leq c(\bC,\alpha) \delta_{2}.
\]

We can write the mean curvature operator as 
\[
\cM_\bC(u) = \cL_\bC(u) + \cE(u),
\]
where the non-linear error part $\cE(u)$ satisfies certain, relatively standard scale-invariant structure conditions (see \cite{CaHaSi}).  Given $g \in C^{2,\alpha}(\Sigma)$, \cite{CaHaSi} show there are numbers $\eps_{10} < \delta_{10}$, depending only on $\bC, \alpha$, so that provided $|g|_{2,\alpha} < \eps_{10}$, then for every $w$ in the convex space
\[
\cB := \{ w \in C^{2,\alpha}(\bC_1) : \quad \Pi_J(w_{r =1}) = \Pi_J(g), \quad |w|_B \leq \delta_{10}\},
\]
there is a unique solution $v = :\cU(w) \in \cB$ to the linear problem
\[
\cL_\bC(v) = -\cE(w) \text{ on } \bC_1, \quad \Pi_J(v_{r = 1}) = \Pi_J(g) \text{ on } \Sigma, \quad \sup_{0 < r \leq 1} r^{-\beta} \left( \int_{\Sigma} v(r\omega)^2 d\omega \right)^{1/2} < \infty ,
\]
and \emph{moreover}, that $\cU$ is a contraction mapping on $\cB$. 

To prove Proposition \ref{prop:uniqueness} it therefore suffices to show that, with $g = u|_{r =1}$, then $u \in \cB$ and $|g|_{2,\alpha} \leq \eps_{10}$.  However, both of these follow from \eqref{eqn:u-B-bound} by ensuring $\delta_{2}(\bC, \alpha)$ is sufficiently small.
\end{proof}

\bibliographystyle{plain}
\bibliography{references-Cal}

\end{document}